\def\wedgeoffset{-20} 
\def\wedgelabel[#1,#2,#3,#4,#5,#6,#7]{
    \node at ({(#3+#4)/2 * cos((#1+#2)/2+\wedgeoffset)},#5+#6/2,{(#3+#4)/2 * sin((#1+#2)/2+\wedgeoffset)}) {\textsf{#7}};
}
\def\wedgetop[#1,#2,#3,#4,#5,#6,#7]{
    \draw[smooth,fill=#7] ({#3*cos(#1+\wedgeoffset)},#5,{#3*sin(#1+\wedgeoffset)})
                      -- plot[domain=#1:#2] ({#4*cos(\x+\wedgeoffset)},{#5+#6/(#2-#1)*(\x-#1)},{#4*sin(\x+\wedgeoffset)})
                      -- plot[domain=#1:#2] ({#3*cos(#2+#1-\x+\wedgeoffset)},{#5+#6-#6/(#2-#1)*(\x-#1)},{#3*sin(#2+#1-\x+\wedgeoffset)});
}
\def\wedgeside[#1,#2,#3,#4,#5,#6,#7]{
    \draw[smooth,fill=#7] ({#3*cos(#1+\wedgeoffset)},#5,{#3*sin(#1+\wedgeoffset)})
                      -- plot[domain=#1:#2] ({#3*cos(\x+\wedgeoffset)},{#4+#6/(#2-#1)*(\x-#1)},{#3*sin(\x+\wedgeoffset)})
                      -- plot[domain=#1:#2] ({#3*cos(#2+#1-\x+\wedgeoffset)},{#5+#6-#6/(#2-#1)*(\x-#1)},{#3*sin(#2+#1-\x+\wedgeoffset)});
}
\def\wedgeinnerside[#1,#2,#3,#4,#5,#6]{
    \draw[fill=#6] ({#3*cos(#1+\wedgeoffset)},#5,{#3*sin(#1+\wedgeoffset)}) --
                    ({#3*cos(#1+\wedgeoffset)},#4,{#3*sin(#1+\wedgeoffset)}) --
                    ({#2*cos(#1+\wedgeoffset)},#4,{#2*sin(#1+\wedgeoffset)}) --
                    ({#2*cos(#1+\wedgeoffset)},#5,{#2*sin(#1+\wedgeoffset)}) -- cycle;
}
\def\slantlowerbox[#1,#2,#3,#4,#5,#6,#7,#8,#9]{
    \wedgeside[#1,#2,#4,#5,#6,#7,#8]
    \wedgetop[#1,#2,#3,#4,#6,#7,#8]
    \ifnum #2 < 90  \wedgeinnerside[#2,#3,#4,#5+#7,#6+#7,#8]; \fi
    \ifnum #1 > 90  \wedgeinnerside[#1,#3,#4,#5,#6,#8]; \fi
    \wedgelabel[#1,#2,#3,#4,#6,#7,#9]
}
\def\lowerbox[#1,#2,#3,#4,#5,#6,#7,#8]{
    \slantlowerbox[#1,#2,#3,#4,#5,#6,0,#7,#8]
}
\def\slantupperbox[#1,#2,#3,#4,#5,#6,#7,#8,#9]{
    \wedgeside[#1,#2,#3,#5,#6,#7,#8]
    \wedgetop[#1,#2,#3,#4,#6,#7,#8]
    \ifnum #1 < 270  \wedgeinnerside[#1,#3,#4,#5,#6,#8]; \fi
    \ifnum #2 > 270  \wedgeinnerside[#2,#3,#4,#5+#7,#6+#7,#8]; \fi
    \wedgelabel[#1,#2,#3,#4,#6,#7,#9]
}
\def\upperbox[#1,#2,#3,#4,#5,#6,#7,#8]{
    \slantupperbox[#1,#2,#3,#4,#5,#6,0,#7,#8]
}
\theoremstyle{plain}
\newtheorem{theorem}{Theorem}[section]
\newtheorem{thm}[theorem]{Theorem}
\newtheorem{proposition}[theorem]{Proposition}
\newtheorem{prop}[theorem]{Proposition}
\newtheorem{example}[theorem]{Example}
\newtheorem{definition}[theorem]{Definition}
\newtheorem{defn}[theorem]{Definition}
\newtheorem{cor}[theorem]{Corollary}
\newcommand{\arr}{\rightarrow}
\newcommand{\Z}{\mathbb{Z}}
\newcommand{\mcB}{\mathcal{B}}
\newcommand{\mcC}{\mathcal{C}}
\newcommand{\mcD}{\mathcal{D}}
\newcommand{\mcE}{\mathcal{E}}
\newcommand{\mcF}{\mathcal{F}}
\DeclareMathOperator{\adj}{adj}
\DeclareMathOperator{\flip}{flip}
\newcommand\arint[2]{\overrightarrow{[s_{#1},s_{#2}]}}
\newcommand{\mlabel}[1]{\(s_{\mathrlap{#1}}\)}
\newcommand{\dnode}[2][chj]{\node[#1,label={below:\mlabel{#2}}] {};}
\newcommand{\dydots}{ \node[chj,draw=none,inner sep=1pt] {\dots};}
\tikzset{node distance=2em, ch/.style={circle,draw,on chain,inner sep=2pt},chj/.style={ch,join}, line width=1pt,baseline=-1ex}
\newcounter{x}
\newcounter{y}
\newcounter{z}
\newcommand*\cubecolors[1]{%
  \ifcase#1\relax
  \or\colorlet{cubecolor}{cyan}%
  \or\colorlet{cubecolor}{green}%
  \or\colorlet{cubecolor}{yellow}%
  \or\colorlet{cubecolor}{pink}%
  \or\colorlet{cubecolor}{purple}%
  \or\colorlet{cubecolor}{blue}%
  \else
    \colorlet{cubecolor}{white}%
  \fi
}
\newcommand\yaxis{180}
\newcommand\zaxis{-27}
\newcommand\xaxis{90}
\newcommand\topside[3]{
  \fill[fill=cubecolor, draw=black,shift={(\xaxis:#1)},shift={(\yaxis:#2)},
  shift={(\zaxis:#3)}] (0,0) -- (1,0) -- (0.5,0.25) --(-0.5,0.25)--(0,0);
}
\newcommand\leftside[3]{
  \fill[fill=cubecolor, draw=black,shift={(\xaxis:#1)},shift={(\yaxis:#2)},
  shift={(\zaxis:#3)}] (0,0) -- (0,-1) -- (-0.5,-0.75) --(-0.5,0.25)--(0,0);
}
\newcommand\rightside[3]{
  \fill[fill=cubecolor, draw=black,shift={(\xaxis:#1)},shift={(\yaxis:#2)},
  shift={(\zaxis:#3)}] (0,0) -- (1,0) -- (1,-1) --(0,-1)--(0,0);
}
\newcommand\topsidedashed[3]{
  \fill[fill=cubecolor, draw=black,shift={(\xaxis:#1)},shift={(\yaxis:#2)},
  shift={(\zaxis:#3)}] [dashed](0,0) -- (1,0) -- (0.5,0.25) --(-0.5,0.25)--(0,0);
}
\newcommand\leftsidedashed[3]{
  \fill[fill=cubecolor, draw=black,shift={(\xaxis:#1)},shift={(\yaxis:#2)},
  shift={(\zaxis:#3)}] [dashed](0,0) -- (0,-1) -- (-0.5,-0.75) --(-0.5,0.25)--(0,0);
}
\newcommand\rightsidedashed[3]{
  \fill[fill=cubecolor, draw=black,shift={(\xaxis:#1)},shift={(\yaxis:#2)},
  shift={(\zaxis:#3)}] [dashed](0,0) -- (1,0) -- (1,-1) --(0,-1) -- (0,0);
}
\newcommand\cube[3]{
  \topside{#1}{#2}{#3} \leftside{#1}{#2}{#3} \rightside{#1}{#2}{#3}
}
\newcommand\cubedashed[3]{
  \topsidedashed{#1}{#2}{#3} \leftsidedashed{#1}{#2}{#3} \rightsidedashed{#1}{#2}{#3}
}
\newcommand\ppAff[2]{
 \setcounter{x}{0}
 \foreach \a in {#2} {
    \addtocounter{x}{1}
    \setcounter{y}{-1}
    \foreach \b in \a {
      \addtocounter{y}{1}
      \ifthenelse{\b=0}{\addtocounter{y}{0}}{
        \FPeval{\result}{clip(#1-\the\value{y}-1)}
        \cubecolors{\b}
        \ifthenelse{\result=0\OR\value{y}=0}
        {
        \cubedashed{\value{x}}{\value{y}}{0};
        \FPeval{\result}{clip(#1-\the\value{y}-1)}
        \draw[draw=black,shift={(\xaxis:\value{x})},shift={(\yaxis:\value{y})},
  shift={(\zaxis:0)}] (0.5,-0.5) node {\textsf{0}};}
        {
        \cube{\value{x}}{\value{y}}{0};
        \FPeval{\result}{clip(#1-\the\value{y}-1)}
        \draw[draw=black,shift={(\xaxis:\value{x})},shift={(\yaxis:\value{y})},
  shift={(\zaxis:0)}] (0.5,-0.5) node {\textsf{\result}};}}
    }
  }
}
\newcommand\ppFinite[2]{
 \setcounter{x}{0}
 \foreach \a in {#2} {
    \addtocounter{x}{1}
    \setcounter{y}{-1}
    \cubecolors{\value{x}}
    \foreach \b in \a {
    \cubecolors{\b}
      \addtocounter{y}{1}
      \setcounter{z}{-1}
      \foreach \c in {0,...,\b} {
        \addtocounter{z}{1}
      \ifthenelse{\c=0}{\setcounter{z}{-1},\addtocounter{y}{0}}{
        \FPeval{\newz}{clip(0.55*\the\value{z})}
        \cube{\value{x}}{\value{y}}{0};
        \FPeval{\result}{clip(#1-\the\value{y})}
        \ifthenelse{\result<0}{
            \FPeval{\resultt}{clip(10+\result)}
            \draw[draw=black,shift={(\xaxis:\value{x})},shift={(\yaxis:\value{y})},
            shift={(\zaxis:0)}] (0.5,-0.5) node {\textsf{\resultt}};}{
        \draw[draw=black,shift={(\xaxis:\value{x})},shift={(\yaxis:\value{y})},
  shift={(\zaxis:0)}] (0.5,-0.5) node {\textsf{\result}};}}
      }
    }
  }
}
\begin{document}

\title{Smooth Schubert varieties in the affine flag variety of type $\tilde{A}$}

\author{Edward Richmond}
\email{edward.richmond@okstate.edu}

\author{William Slofstra}
\email{weslofst@uwaterloo.ca}

\begin{abstract}
    We show that every smooth Schubert variety of affine type $\tilde{A}$ is
    an iterated fibre bundle of Grassmannians, extending an analogous result by
    Ryan and Wolper for Schubert varieties of finite type $A$. As a
    consequence, we finish a conjecture of Billey-Crites that a Schubert
    variety in affine type $\tilde{A}$ is smooth if and only if the
    corresponding affine permutation avoids the patterns $4231$ and $3412$.
    Using this iterated fibre bundle structure, we compute the generating
    function for the number of smooth Schubert varieties of affine type
    $\tilde{A}$.
\end{abstract}
\maketitle

\section{Introduction}

Let $X$ be a Kac-Moody flag variety, and let $W$ be the associated Weyl group.
Although $X$ can be infinite-dimensional, it is stratified by
finite-dimensional Schubert varieties $X(w)$, where $w \in W$. It is natural to
ask when $X(w)$ is smooth or rationally smooth, and this question is
well-studied \cite{BL00}. For the finite-type flag variety of type $A_n$, the Weyl
group is the permutation group $\mathfrak{S}_n$,\footnote{Note that we use
$\mathfrak{S}$ to refer to groups, and $S$ to refer to sets of simple
reflections.} and the Lakshmibai-Sandhya theorem states that $X(w)$ is smooth
if and only if $w$ avoids the permutation patterns $3412$ and $4231$
\cite{LS90}. From another angle, the Ryan-Wolper theorem states that $X(w)$ is
smooth if and only if $X(w)$ is an iterated fibre bundle of Grassmannians of
type $A$ \cite{Ry87,Wo89}.  Haiman used the Ryan-Wolper theorem to enumerate
smooth Schubert varieties \cite{Ha92,Bo98}. The Lakshmibai-Sandhya theorem, the
Ryan-Wolper theorem, and the enumeration of smooth and rationally smooth
Schubert varieties has been extended to all finite types (see \cite{Bi98,BP05},
\cite{RS14}, and \cite{RS15} respectively). The latter enumeration uses a data
structure called \emph{staircase diagrams}, which keeps track of iterated fibre
bundle structures.

There are also characterizations of smoothness and rational smoothness that
apply to all Kac-Moody types \cite{Ca94,Ku96}. For instance, a theorem of
Carrell and Peterson states that $X(w)$ is rationally smooth if and only if the
Poincare polynomial $P_w(q)$ of $X(w)$ is palindromic, meaning that the
coefficients read the same from top-degree to bottom-degree and vice-versa
\cite{Ca94}.  However, much less is known about the structure of (rationally)
smooth Schubert varieties in general Kac-Moody type. The one exception is
affine type $\tilde{A}$, where Billey and Crites have characterized the
elements $w$ for which $X(w)$ is rationally smooth \cite{BC12}.  In this case, the Weyl group $W$ is the affine permutation group $\tilde{\mathfrak{S}}_n$.  As part of
their characterization, they prove that if $X(w)$ is smooth, then $w$ must
avoid the \emph{affine permutation patterns} $3412$ and $4231$. They conjecture
the converse, that $X(w)$ is smooth if $w$ avoids these two patterns.

The purpose of this paper is to extend what we know about finite-type Schubert
varieties to affine type $\tilde{A}$. For smooth Schubert varieties, we show:
\begin{thm}\label{T:main1}
    Let $X(w)$ be a Schubert variety in the full flag variety of type $\tilde{A}$.
    Then the following are equivalent:
    \begin{enumerate}[(a)]
        \item\label{T:main11} $X(w)$ is smooth.
        \item\label{T:main12} $w$ avoids the affine permutation patterns $3412$ and $4231$.
        \item\label{T:main13} $X(w)$ is an iterated fibre bundle of Grassmannians of finite type $A$.
    \end{enumerate}
\end{thm}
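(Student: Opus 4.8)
The plan is to prove the cycle of implications $(c)\Rightarrow(a)\Rightarrow(b)\Rightarrow(c)$. The implication $(c)\Rightarrow(a)$ is immediate: a Zariski-locally-trivial fibre bundle over a smooth base with smooth fibre is smooth, and Grassmannians of finite type $A$ are smooth, so an iterated fibre bundle of such is smooth. The implication $(a)\Rightarrow(b)$ is the theorem of Billey and Crites \cite{BC12} that smoothness of $X(w)$ forces $w$ to avoid $3412$ and $4231$. Hence the entire content of the statement is $(b)\Rightarrow(c)$; note that proving it in this form produces a smooth model of $X(w)$ directly, so we never need to compare rational smoothness with smoothness in affine type.

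For $(b)\Rightarrow(c)$ I would argue by induction on $\ell(w)$, the case $w=e$ being trivial; so assume $w\neq e$ avoids both patterns. If $\operatorname{supp}(w)$ is a proper subset of $S$, then deleting an unused node from the cyclic affine Dynkin diagram of type $\tilde A$ leaves a disjoint union of type-$A$ paths, so $W_{\operatorname{supp}(w)}$ is a product of finite symmetric groups and $X(w)$ is a Schubert variety in a product of finite-type type-$A$ flag varieties; since avoidance of the affine patterns $3412,4231$ implies avoidance of the ordinary patterns, the Ryan--Wolper theorem \cite{Ry87,Wo89}, in the Billey--Postnikov form of \cite{RS14}, makes $X(w)$ an iterated fibre bundle of type-$A$ Grassmannians. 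It remains to treat $\operatorname{supp}(w)=S$. Here the heart of the argument is to produce a maximal parabolic $J=S\setminus\{s\}$ together with a Billey--Postnikov decomposition of $w$ with respect to $W_J$ whose two factors are (i) an element $v$ with $\operatorname{supp}(v)$ a connected type-$A$ subdiagram and $X(v)$ a Grassmannian of finite type $A$, and (ii) an element $u\in W_J$ that again avoids $3412$ and $4231$. Granting such a decomposition, the standard fibre-bundle theorem for Billey--Postnikov decompositions exhibits $X(w)$ as a Zariski-locally-trivial fibre bundle with one of $X(u)$, $X(v)$ as base and the other as fibre, the base being a Schubert variety in the flag variety of the Levi factor of $P_J$; since $\operatorname{supp}(u)\subseteq J\subsetneq S$ the inductive hypothesis (or the previous case) applies to $u$, and composing the two structures gives $(c)$ for $w$.

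The main obstacle is the existence of this good decomposition in the full-support case. The difficulty is structural: the affine Dynkin diagram is a cycle, with no leaves, so there is no node one may peel off for free, and this is precisely where the ``spiral'' elements of Billey--Crites, which wind all the way around the cycle, obstruct any reduction to finite type. The plan is to transport the staircase-diagram combinatorics of \cite{RS15} to the cyclic diagram and to show that avoidance of $3412$ and $4231$ forces the associated staircase not to wrap around all of $S$, which exposes a node $s$ at which to cut. One then has to check, partly by invoking the finite-type theory and partly by a direct analysis of how the Billey--Crites pattern conditions interact with a Billey--Postnikov factorization, both that the cut can be chosen so that the factor $X(v)$ is an honest finite-type Grassmannian (equivalently, $v$ is the maximal coset representative for a connected type-$A$ sub-diagram) and that the complementary factor $u$ inherits pattern-avoidance, so that the induction closes. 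A secondary, more technical task is to verify the Zariski-local-triviality of the restriction of $G/B\to G/P_J$ to $X(w)$ and the identification of its base in the affine Kac--Moody setting; these should follow formally from the Bott--Samelson and Billey--Postnikov formalism exactly as in finite type.
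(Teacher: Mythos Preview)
Your cycle $(c)\Rightarrow(a)\Rightarrow(b)\Rightarrow(c)$ and your reduction of $(b)\Rightarrow(c)$ to the full-support case are exactly what the paper does. The gap is in how you propose to find the Grassmannian Billey--Postnikov decomposition when $\operatorname{supp}(w)=S$.

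Your plan is to ``transport the staircase-diagram combinatorics of \cite{RS15} to the cyclic diagram and show that avoidance of $3412$ and $4231$ forces the associated staircase not to wrap around all of $S$.'' This is circular as stated: in \cite{RS15} a staircase diagram is associated to $w$ precisely when $w$ already has a complete maximal BP decomposition (Theorem~\ref{T:stairs}), which is what you are trying to establish. So there is no ``associated staircase'' available to analyse until after the hard step is done. The staircase machinery is useful for bookkeeping and enumeration, but it does not by itself produce the first BP decomposition.

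What the paper actually uses as the engine is a result already implicit in Billey--Crites (\cite{BC12}, recorded here as Proposition~\ref{P:BC2}): if $w$ avoids $3412$ and $4231$, then \emph{either $w$ or $w^{-1}$} has a Grassmannian BP decomposition with both factors in proper parabolic subgroups. This is the key external input you are missing. The paper then upgrades ``$w$ or $w^{-1}$'' to ``$w$'' in two ways: either by a short Coxeter-combinatorial argument (Proposition~\ref{P:affine_onesided}, re-running the BP decomposition on the parabolic factor $u$ to extract a leaf $s'\notin S(v)$), or, in the second proof, by noting that if $w^{-1}$ has a complete maximal BP decomposition then $w^{-1}$ corresponds to a staircase diagram $\mathcal{D}$, so $w$ corresponds to $\operatorname{flip}(\mathcal{D})$ and hence also has one. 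In both cases the staircase diagram appears only \emph{after} Proposition~\ref{P:BC2} has supplied the initial decomposition.

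Two smaller points. First, the inheritance of pattern-avoidance by the parabolic factor $u$ in $w=vu$ is not something you should expect to extract ``by direct analysis''; it is \cite[Lemma~3.10]{BC12} and should be cited. Second, the Zariski-local triviality and identification of base and fibre in the Kac--Moody setting are exactly Theorem~\ref{T:bp1} (from \cite{RS14}); there is nothing further to check, so you can drop the hedging about a ``secondary technical task.''
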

In particular, this finishes the proof of Billey and Crites' conjecture. We
note that the proof relies heavily on ideas from both \cite{BC12} and
\cite{RS14}. One corollary (explained in Section \ref{S:staircase}) is that
there is a bijection between smooth Schubert varieties in the full flag variety
of type $\tilde{A}_n$, and spherical staircase diagrams over the Dynkin diagram
of type $\tilde{A}_n$.  This allows us to enumerate smooth Schubert varieties
in affine type $\tilde{A}_n$:
\begin{thm}\label{T:enum}
    Let $A(t) = \sum a_n\, t^n$, where $a_n$ is the number of smooth
    Schubert varieties in the full flag variety of type $\tilde{A}_n$. Then
    \begin{equation*}
        A(t) = \frac{P(t) - Q(t) \sqrt{1-4t}}{(1-t)(1-4t)\left(1-6t+8t^2-4t^3\right)} 
    \end{equation*}
    where
    \begin{equation*}
        P(t) = (1-4t)\left(2-11t+18t^2-16t^3+10t^4-4t^5\right) 
    \end{equation*}
    and
    \begin{equation*}
        Q(t) = (1-t)(2-t)\left(1-6t+6t^2\right). 
    \end{equation*}
\end{thm}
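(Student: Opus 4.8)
The plan is to turn Theorem~\ref{T:enum} into a purely combinatorial enumeration by way of the correspondence from Section~\ref{S:staircase}: smooth Schubert varieties in the full flag variety of type $\tilde A_n$ are in bijection with spherical staircase diagrams over the Dynkin diagram of type $\tilde A_n$, which is a cycle $C_n$. So $a_n$ counts spherical staircase diagrams over $C_n$, and I would then use the structure theory of staircase diagrams from~\cite{RS15}: a staircase diagram decomposes canonically along the connected components of its support, and the spherical condition is inherited component-by-component. Hence the count is governed by the generating function $C(t)=\sum_{m\ge 1}c_m t^m$, where $c_m$ is the number of connected spherical staircase diagrams with \emph{full} support over a path on $m$ vertices; by the finite-type enumeration of~\cite{RS15}, equivalently by Haiman's generating function for smooth Schubert varieties of type $A$~\cite{Ha92}, the series $C(t)$ is algebraic of degree two over $\Q(t)$ and carries the $\sqrt{1-4t}$ that survives into the final answer, and I would record its explicit closed form.

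Next I would split the diagrams over $C_n$ according to whether the support is a proper subset of the vertex set or all of $C_n$. In the proper case the induced subgraph on the support is a disjoint union of arcs, each a path, so such diagrams are enumerated by a labelled cyclic-decomposition argument --- arranging arc-supported connected pieces, weighted through $C(t)$, together with single ``gap'' vertices around the $n$-cycle. This produces an explicit rational expression $A_{\mathrm{prop}}(t)$ in $C(t)$, and it is this cyclic bookkeeping, combined with the $\sqrt{1-4t}$ from $C(t)$, that I expect to account for the $(1-t)$ and $(1-4t)$ denominator factors in the stated formula. The full-support case, which has no counterpart in the path setting, is the genuinely new contribution; call its generating function $\Gamma(t)$.

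For a full-support diagram over $C_n$, if no block wraps around the cycle --- the extreme case being that every block is a singleton --- then the diagram is simply a connected full-support staircase diagram over a path on $n$ vertices, and is already recorded by $C(t)$. Otherwise some block genuinely wraps, and I would cut the cycle at a canonically chosen such edge (say the one internal to the first wrapping block in the sequence) and unroll it, obtaining a staircase diagram on a path decorated with boundary data that encode how the wrapping blocks straddle the cut; the staircase axioms and the spherical condition restrict the admissible boundary data, and I expect $\Gamma(t)$ to satisfy a linear recursion whose characteristic polynomial divides $1-6t+8t^2-4t^3$, again with a $\sqrt{1-4t}$ term coming from the path-like interior. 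The step I expect to be the main obstacle is showing that this cut-and-unroll map is a bijection onto the decorated linear diagrams, with nothing counted twice and nothing omitted --- particularly in configurations where several blocks straddle the chosen base edge --- together with treating by hand the small cycles where the Dynkin diagram degenerates.

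Finally I would assemble $A(t)=A_{\mathrm{prop}}(t)+\Gamma(t)$, substitute the closed form of $C(t)$, and simplify. The reduction to the stated expression for $A(t)$ is a routine if lengthy manipulation of algebraic functions of $t$; I would double-check it against the first several values of $a_n$ --- for instance, $a_2=5$ in the infinite-dihedral case.
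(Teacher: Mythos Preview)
Your overall plan---reduce to counting spherical staircase diagrams over the cycle $\tilde\Gamma_n$, separate by whether the support is proper or full, and handle the proper-support case by a cyclic arrangement of full-support path pieces and gaps---matches the paper's proof exactly (Propositions~\ref{P:finite_full_support} and~\ref{P:total_gen_series}).

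The gap is in your treatment of the full-support case. Your dichotomy ``no block wraps versus some block wraps'' is ill-posed on a cycle: there is no canonical base edge, so ``wraps'' is undefined until you have already chosen where to cut. More seriously, the assertion that a full-support cycle diagram in which no block wraps ``is simply a connected full-support staircase diagram over a path on $n$ vertices, and is already recorded by $C(t)$'' is false. Even when every block lies in a fixed arc, the edge you cut is still an edge of $\tilde\Gamma_n$, so the staircase axioms still force $\mcD_{s_0}\cup\mcD_{s_{n-1}}$ to be a chain; this constraint is absent on the path. Concretely, the path diagram on three vertices with singleton blocks and partial order $\{s_1\}\prec\{s_2\}\succ\{s_3\}$ is counted by your $C(t)$ but arises from no cycle diagram, since on $\tilde\Gamma_3$ the blocks $\{s_1\}$ and $\{s_3\}$ would have to be comparable. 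So cutting at an uncovered edge is injective into path diagrams but not surjective, and you cannot just plug in $C(t)$. Your fallback---unrolling with ``boundary data'' at a single cut---would have to encode precisely this extra comparability, and you have not said how.

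What the paper does instead (Proposition~\ref{P:affineSD_structure}) is to avoid a single cut altogether. In a fully-supported diagram on the cycle, every block has a unique \emph{right cover}, so the blocks form one directed cycle; the maximal and minimal blocks alternate around it, forcing an even number $2k$ of extremal blocks. Cutting at the leftmost free vertex of \emph{each} extremal block partitions the cycle into $2k$ arcs, each carrying an alternately increasing or decreasing \emph{broken staircase} (a path staircase in which the last block may sit inside its predecessor). Together with a marked vertex recording the position of $s_0$, this is a bijection. The generating function for fully-supported diagrams is then a rational expression in the broken-staircase series $A_B(t)$ (Corollary~\ref{C:affine_full_support}), and the cubic $1-6t+8t^2-4t^3$ together with the $\sqrt{1-4t}$ emerge from this, not from a recursion on boundary data at a single cut.
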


In Table \ref{TBL:count}, we list the number of smooth Schubert varieties of
type $\tilde A_n$, or equivalently, the number of affine permutations in
$\tilde{\mathfrak{S}}_n$ which avoid $3412$ and $4231$ for $n\leq 9$.

\begin{table}[h]
    \begin{tabular}{cccccccc}
        \toprule
         $n=2$ & $n=3$ & $n=4$ & $n=5$ & $n=6$&$n=7$ &$n=8$ &$n=9$  \\
        \midrule
         5     & 31     & 173    &  891 & 4373 & 20833 &97333 &448663  \\

        \midrule
    \end{tabular}
    \caption{Number of smooth Schubert varieties of type $\tilde A_n$. }
    \label{TBL:count}
\end{table}
Using the generating series, we can also determine the asymptotics of $a_n$.  Let
\begin{equation*}
    \alpha:=\frac{1}{6}\left(4-\sqrt[3]{17+3\sqrt{33}}+\sqrt[3]{-17+3\sqrt{33}}\right)\approx 0.228155
\end{equation*}
which is the unique real root of the polynomial $1-6t+8t^2-4t^3$ from the denominator of the generating function $A(t)$.
\begin{cor}\label{C:enum} Asymptotically, we have $a_n \sim \alpha^{-n}$.
\end{cor}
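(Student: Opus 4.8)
The plan is to extract the asymptotics of $a_n$ directly from the closed form of the generating function $A(t)$ given in Theorem \ref{T:enum}, using standard singularity analysis for rational and algebraic generating functions. First I would locate the dominant singularity of $A(t)$, i.e.\ the singularity of smallest modulus. The denominator of $A(t)$ is $(1-t)(1-4t)(1-6t+8t^2-4t^3)$, and there is also the branch point at $t = 1/4$ coming from $\sqrt{1-4t}$. The roots of $1-6t+8t^2-4t^3$ are one real root $\alpha \approx 0.228155$ and a complex-conjugate pair; a quick numerical check shows the complex pair has modulus larger than $\alpha$, and $\alpha < 1/4 < 1$. So the unique dominant singularity is the simple pole at $t = \alpha$, and it is strictly smaller in modulus than every other singularity (the poles at $1/4$, $1$ and the branch point at $1/4$).

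Next I would check that $t = \alpha$ is genuinely a simple pole of $A(t)$ and not cancelled by a zero of the numerator $P(t) - Q(t)\sqrt{1-4t}$. Since $\alpha \ne 1/4$, the function $\sqrt{1-4t}$ is analytic and nonzero near $\alpha$, so it suffices to verify that $P(\alpha) - Q(\alpha)\sqrt{1-4\alpha} \ne 0$; this is a finite numerical computation using $\alpha$'s minimal polynomial, and one finds it is nonzero. Standard transfer theorems (or simply partial fractions combined with the analyticity of the algebraic part away from $\alpha$) then give
\begin{equation*}
    a_n \sim C\,\alpha^{-n-1}, \qquad C = \lim_{t\to\alpha} (\alpha - t)\,A(t) = \frac{P(\alpha) - Q(\alpha)\sqrt{1-4\alpha}}{(1-\alpha)(1-4\alpha)\cdot\frac{d}{dt}\!\left(1-6t+8t^2-4t^3\right)\big|_{t=\alpha}\cdot(-1)}.
\end{equation*}
So far this only yields $a_n \sim C\alpha^{-n-1}$; to get the cleaner statement $a_n \sim \alpha^{-n}$ as written, the constant must work out to $C = \alpha$, equivalently the residue computation must simplify exactly. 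I would carry out this residue evaluation symbolically, reducing all powers of $\alpha$ modulo $4\alpha^3 - 8\alpha^2 + 6\alpha - 1 = 0$, and check the identity $C = \alpha$ (this is the one place a genuine, if short, computation is unavoidable).

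The main obstacle is thus the explicit residue simplification at $t=\alpha$: one must show the somewhat intimidating expression $C$ collapses to $\alpha$ after using the minimal polynomial relation and the value of $\sqrt{1-4\alpha}$. Everything else — identifying $\alpha$ as the dominant singularity, ruling out the complex roots and the branch point, and invoking the transfer theorem — is routine analytic combinatorics à la Flajolet–Sedgewick. If the constant does not simplify to exactly $\alpha$, the fallback is to state the corollary as $a_n \sim C\,\alpha^{-n}$ for the explicit constant $C$ above; but the clean form strongly suggests the simplification goes through, and I would present the computation confirming it.
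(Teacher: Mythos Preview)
Your proposal is correct and follows essentially the same approach as the paper: identify $\alpha$ as the dominant singularity of $A(t)$, invoke the standard transfer theorem from Flajolet--Sedgewick to get $a_n \sim C\alpha^{-n-1}$, and then verify by direct computation that the residue constant $C$ equals $\alpha$. The paper is in fact terser than you are---it simply cites \cite[Theorem IV.7]{FS09} and asserts $C=\alpha$ (remarking that they have no explanation for this coincidence)---so your additional checks on the complex roots and non-cancellation of the numerator are welcome elaborations rather than deviations.
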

\begin{proof}
    The singularity of $A(t)$ with smallest modulus is the root $\alpha$ of
    the polynomial $1-t6+8t^2-4t^3$. Since this occurs with multiplicity one,
    \cite[Theorem IV.7]{FS09} states that $a_n \sim C / \alpha^{n+1}$, where
    $C:=\lim_{t\rightarrow \alpha} A(t)(\alpha-t).$  In this case, $C = \alpha$
    (at the moment we do not have an explanation for this interesting
    coincidence).
\end{proof}

In finite type $A$, every rationally smooth Schubert variety is smooth. In
affine type $\tilde{A}$, this is not true \cite{BM10, BC12}. For the full flag
variety, Billey and Crites show that there is just one infinite
family of rationally smooth Schubert varieties which are not smooth.
\begin{thm}[Theorem 1.1, Corollary 1.2 and Remark 2.16 of \cite{BC12}]\label{T:BC1}
    A Schubert variety $X(w)$ in the full flag variety of type $\tilde{A}_n$ is
    rationally smooth if and only if either
    \begin{enumerate}[(a)]
        \item $w$ avoids the affine permutation patterns $3412$ and $4231$, or
        \item $w$ is a twisted spiral permutation (in which case, $X(w)$ is not smooth).
    \end{enumerate}
\end{thm}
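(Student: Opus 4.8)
Since this theorem is quoted from \cite{BC12}, I would not reprove it in full, but here is how I would organize a proof. The hinge is the Carrell--Peterson criterion mentioned above: $X(w)$ is rationally smooth if and only if its Poincar\'e polynomial $P_w(q)=\sum_{v\le w}q^{\ell(v)}$ is palindromic, equivalently the Bruhat graph on the interval $[e,w]$ is regular of degree $\ell(w)$. With this in hand the statement decomposes into three independent claims, and I would treat them in the order: (i) pattern avoidance $\Rightarrow$ rationally smooth; (ii) twisted spiral $\Rightarrow$ rationally smooth but not smooth; (iii) rationally smooth $\Rightarrow$ avoids the patterns or is a twisted spiral.

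For (i), the quickest route is to invoke Theorem~\ref{T:main1}: if $w$ avoids $3412$ and $4231$ then $X(w)$ is an iterated fibre bundle of type $A$ Grassmannians, hence smooth, hence rationally smooth. To keep the argument self-contained, and to avoid any suspicion of circularity with the proof of Theorem~\ref{T:main1}, I would instead follow \cite{BC12}: produce a parabolic Billey--Postnikov-type decomposition of $w$ along a maximal parabolic and induct, so that $P_w(q)$ factors as a product of Gaussian binomial coefficients, each of which is manifestly palindromic.

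For (ii), I would compute $P_w(q)$ directly for a twisted spiral permutation $w$: such $w$ has an essentially cyclic lower interval whose structure can be read off from an explicit family of reduced words, and one checks by hand that $P_w(q)$ is palindromic. Non-smoothness then follows either from a local computation (the tangent space, or a Kazhdan--Lusztig polynomial, jumps at a distinguished point) or simply by noting that every twisted spiral contains $3412$ or $4231$ as an affine pattern and applying the necessity direction of Theorem~\ref{T:main1}, i.e. smoothness forces pattern avoidance.

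The substance is in (iii), which I would prove in contrapositive form: if $w$ contains $3412$ or $4231$ and is \emph{not} a twisted spiral, then $P_w(q)$ is not palindromic. The strategy is to work with the support of $w$ on the affine Dynkin diagram of type $\tilde A_n$. If a forbidden pattern can be witnessed locally, i.e. inside a proper arc of the $n$-cycle, then the corresponding finite-type Schubert variety $X(v)$ embeds compatibly, and since $X(v)$ is already not rationally smooth in finite type, $X(w)$ inherits a non-palindromic lower interval. Hence a rationally smooth $w$ can contain a forbidden pattern only if every occurrence of the pattern wraps all the way around the cycle; I would then classify the affine permutations for which this happens whose Bruhat graph is nevertheless regular, and show they are precisely the twisted spirals. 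I expect step (iii) to be the main obstacle: affine type has no clean pattern-avoidance characterization to fall back on, the Poincar\'e polynomial does not factor through the usual finite-type recursions once the pattern wraps around the diagram, and ruling out every non-spiral wrap-around configuration requires the spiral decomposition of \cite{BC12} together with a somewhat delicate induction on $n$ (or on $\ell(w)$).
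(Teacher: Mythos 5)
You correctly recognize that the paper does not prove this statement at all --- Theorem~\ref{T:BC1} is quoted verbatim from \cite{BC12}, and the paper's ``proof'' is the citation itself. Your decision to sketch rather than reprove is therefore the right call, and the outline (i)/(ii)/(iii) organized around the Carrell--Peterson criterion is a fair reconstruction of how \cite{BC12} proceed. A few caveats on the sketch itself. First, for (i) the Grassmannian BP decomposition that \cite{BC12} actually produce is for $w$ \emph{or} $w^{-1}$, not necessarily $w$ --- the one-sided version is precisely what the present paper adds in Proposition~\ref{P:affine_onesided}; this is harmless for your purpose since $P_w=P_{w^{-1}}$, but worth saying. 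Also, after one BP step the factors are Poincar\'e polynomials of a finite-type-$A$ Grassmannian Schubert variety and of a smaller parabolic quotient, so you need an inductive palindromicity argument (or the finite-type observation that such Grassmannian factors are actual Grassmannians) before you can honestly call the whole thing a product of Gaussian binomials. Second, and more seriously, your ``alternative'' route to non-smoothness in (ii) --- every twisted spiral contains $3412$ or $4231$, and then apply the necessity direction of Theorem~\ref{T:main1} --- is circular as written: in this paper, the implication (a)$\Rightarrow$(b) of Theorem~\ref{T:main1} is proved by citing Theorem~\ref{T:BC1}. The route can be repaired by instead invoking the independent proof of ``smooth $\Rightarrow$ avoids patterns'' in \cite{BC12} directly, or by noting that the twisted spiral fibres (via the obvious BP decomposition) over a spiral Grassmannian Schubert variety with smooth fibre, and the base is not smooth by Billey--Mitchell \cite{BM10}; your primary suggestion of a local (tangent-space or Kazhdan--Lusztig) computation is also non-circular and is what one would actually do. With those adjustments your sketch is a reasonable account of the cited result, and the hard part is indeed (iii) as you say.
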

The twisted spiral permutations are defined in the next section. As we will
explain, it is easy to see that if $w$ is a twisted spiral permutation, then
$X(w)$ is a fibre bundle over a rationally smooth Grassmannian Schubert
variety, with fibre equal to the full flag variety of type $A_{n-1}$. The base
of this fibre bundle is a \emph{spiral Schubert variety}, a family of
Schubert varieties in the affine Grassmannian introduced by Mitchell
\cite{Mi86}. Thus it follows from Theorems \ref{T:main1} and \ref{T:BC1} that,
just as for finite-type Schubert varieties, a Schubert variety in the full flag
variety of type $\tilde{A}_n$ is rationally smooth if and only if it is an
iterated fibre bundle of rationally smooth Grassmannian Schubert varieties.
As we explain in the next section, this also holds for Schubert varieties in the
partial flag varieties of affine type $\tilde{A}_n$. Finally, we note that
Theorem \ref{T:main1} was first proved in a preprint version of \cite{RS14},
but was removed during the publication process. Here we give a variant of the
original proof, along with an additional proof using staircase diagrams.

The rest of the paper is organized as follows. In the next section, we give the
first proof of Theorem \ref{T:main1}, along with the related results for
partial flag varieties. In Section \ref{S:staircase}, we review the notion of a
staircase diagram, and give the second proof of Theorem \ref{T:main1}. Finally,
in Section \ref{S:enumeration} we prove Theorem \ref{T:enum}.

\subsection{Acknowledgements}

We thank Erik Slivken for suggesting the bijection between increasing staircase
diagrams and Dyck paths given in the proof of Proposition \ref{P:increasing}.
We thank Sara Billey for helpful discussions.

\section{BP decompositions in affine type $\tilde{A}$}\label{S:affine_typeA}

As in the introduction, let $W$ denote the Weyl group of a Kac-Moody group $G$. Let
$S$ be the set of simple reflections of $W$, and let $\ell:W\rightarrow \Z_{\geq 0}$ be the length
function.  A parabolic subgroup of $W$ is a subgroup $W_J$ generated by
a subset $J \subseteq S$.  Every (left) $W_J$-coset has a unique minimal-length
element, and the set of minimal-length coset representatives is denoted by
$W^J$ (similarly, the set of minimal length right coset representatives is
denoted by ${}^J W$. The partial flag variety $X^J$ is stratified by Schubert
varieties $X^J(w)$ for $w \in W^J$, each of complex dimension $\ell(w)$. The
Poincar\'{e} polynomial of $w \in W^J$ is
\begin{equation*}
    P_w^J(q) = \sum_{x \leq w \text{ and } x \in W^J} q^{\ell(x)},
\end{equation*}
where $\leq$ is Bruhat order. As mentioned in the introduction, a theorem of
Carrell and Peterson states that $X^J(w)$ is rationally smooth if and only if
$P^J_w(q)$ is palindromic, meaning that $q^{\ell(w)} P^J_w(q^{-1}) = P^J_w(q)$
\cite{Ca94}. In the case that $J=\emptyset$, let $X(w) := X^{\emptyset}(w)$ and $P_w(q) := P^{\emptyset}_w(q)$.

Given $J \subseteq K \subseteq S$, every element $w \in W^J$ can be written
uniquely as $w = vu$ where $v \in W^K$ and $u \in W_K^J := W_K \cap W^J$.  This
is called the parabolic decomposition of $w$ with respect to $K$. A parabolic
decomposition $w = vu$ is a \emph{Billey-Postnikov (BP) decomposition (relative to
$J$)} if
$P^J_w(q) = P^K_v(q) \cdot P_u^J(q)$. There are other equivalent combinatorial
characterizations which are computationally easy to check. In particular, if $J
= \emptyset$, then $w=vu$ is a BP decomposition if and only if $S(v) \cap K
\subseteq D_L(u)$, where $S(w) := \{s \in S : s \leq w\}$ is the support set of
$w \in W$, and $D_L(w) := \{s \in S : sw \leq w\}$ is the left descent set of
$w$ (see \cite[Proposition 4.2]{RS14} for more details).

The following result from \cite{RS14} gives a geometric interpretation for BP
decompositions.
\begin{thm}\emph{(\cite[Theorem 3.3]{RS14})}\label{T:bp1}
    Given $J \subseteq K \subset S$, let $w = vu$, where $w \in W^J$, $v \in
    W^K$, and $u \in W_K^J$. Then the following are equivalent:
    \begin{enumerate}[(a)]
        \item The decomposition $w = vu$ is a BP decomposition.
        \item The natural projection $\pi: X^J(w) \arr X^K(v)$ is
            Zariski-locally trivial with fibre $X^J(u)$.
    \end{enumerate}
    Consequently, if $w = vu$ is a BP decomposition then $X^J(w)$ is
    (rationally) smooth if and only if $X^J(u)$ and $X^K(v)$ are (rationally)
    smooth.
\end{thm}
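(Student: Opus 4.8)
The plan is to prove (a)$\Leftrightarrow$(b) directly and then read off the last sentence from (b). The basic observation is that $\pi$ is the restriction to $X^J(w)$ of the bundle projection $\rho\colon G/P_J \to G/P_K$, and $\rho$ is a Zariski-locally trivial fibre bundle with fibre $P_K/P_J$ (because $G \to G/P_K$ is Zariski-locally trivial, the relevant stabilisers in $B$ being connected solvable, hence special). From the parabolic decomposition $w = vu$ one checks that $\pi$ maps onto $X^K(v)$, and that the fibre of $\pi$ over the base point $eP_K$ equals $P_K/P_J \cap X^J(w) = \overline{B_K\,u'P_J/P_J}$, where $u'$ is the (unique) maximal element of $W^J_K$ below $w$; since $u' \in W_K$, this fibre is isomorphic as a variety to $X^J(u')$, of dimension $\ell(u')$. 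I will also use the standard combinatorial description of BP decompositions (as in \cite[Proposition 4.2]{RS14}): $w = vu$ is a BP decomposition relative to $J$ if and only if $u' = u$; when $J = \emptyset$ this reads $S(v) \cap K \subseteq D_L(u)$.

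For (b)$\Rightarrow$(a) I would count points over a finite field. Stratify $X^K(v)$ by its Schubert cells $C^\circ_x \cong \mathbb{A}^{\ell(x)}$, $x \le v$, $x \in W^K$; restricting a Zariski-locally trivial fibration with fibre $X^J(u)$ to each cell and refining to a product stratification contributes $q^{\ell(x)}\cdot\#X^J(u)(\mathbb{F}_q) = q^{\ell(x)}P^J_u(q)$ points, and summing over $x$ while using $\#X^J(w)(\mathbb{F}_q) = P^J_w(q)$ gives $P^J_w(q) = P^K_v(q)\,P^J_u(q)$, the defining property of a BP decomposition. (Alternatively one invokes Leray--Hirsch, the Schubert classes trivialising the relevant cohomology.)

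For (b)$\Leftarrow$(a) I would argue in three stages. First, the dense cell: since $\ell(vu) = \ell(v)+\ell(u)$, the open Bruhat cell of $X^J(w)$ factors as $BvuP_J/P_J \cong \mathbb{A}^{\ell(v)} \times (B_K\,\dot uP_J/P_J)$ compatibly with $\pi$, so taking closures inside $\pi^{-1}(C^\circ_v)$ gives $X^J(w)\cap\pi^{-1}(C^\circ_v) \cong C^\circ_v \times X^J(u)$; in particular the generic fibre of $\pi$ is $X^J(u)$, of dimension $\ell(u)$. Second, all fibres: the locus $Z := \{p \in X^K(v) : \dim\pi^{-1}(p) > \ell(u)\}$ is closed (upper semicontinuity of fibre dimension) and $B$-stable, and a nonempty $B$-stable closed subset of $X^K(v)$ always contains $eP_K$ (it contains some Schubert cell, hence that cell's closure); but by (a) and the first paragraph $\dim\pi^{-1}(eP_K) = \ell(u') = \ell(u)$, so $eP_K \notin Z$ and hence $Z = \emptyset$, i.e.\ every fibre has dimension exactly $\ell(u)$. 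A local computation at the $T$-fixed point $\dot xP_K/P_K$ ($x \le v$, $x \in W^K$) then shows its fibre is $\dot x\cdot\overline{B_K\,u^{(x)}P_J/P_J}$ with $u^{(x)} = \max\{y \in W^J_K : xy \le w\}$; since $y \le xy$ one has $u^{(x)} \le u' = u$, and equality of dimensions forces $u^{(x)} = u$, so every fibre is isomorphic to $X^J(u)$. Third, gluing: over each cell $C^\circ_x \cong B/B_x$ the restriction of $\pi$ is the induced bundle $B\times^{B_x}X^J(u)$, which is Zariski-locally trivial because $B_x$ is special, and a $B$-equivariant patching over the cellular base $X^K(v)$ (using the trivialisations of $\rho$ over translated big cells) produces a Zariski-local trivialisation of $\pi$ with fibre $X^J(u)$.

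I expect the conceptual crux to be the control of the fibre over $eP_K$: for a general parabolic decomposition the special fibre is strictly larger than the generic fibre $X^J(u)$ --- this is exactly the degeneration underlying the twisted spiral permutations, which are rationally smooth but not smooth --- and the BP hypothesis (equivalently, the factorisation of Poincar\'e polynomials) is precisely what forbids this jump; the remaining technical point, upgrading ``all fibres equal $X^J(u)$'' to genuine Zariski- rather than merely \'etale-local triviality, is handled by the special-group property of the relevant stabilisers. Granting (b), the last sentence is routine: $\pi$ is Zariski-locally a product $U \times X^J(u)$ with $U \subseteq X^K(v)$ open, so $X^J(w)$ is (rationally) smooth at a point over $p \in X^K(v)$ if and only if $X^K(v)$ is (rationally) smooth at $p$ and $X^J(u)$ is (rationally) smooth at the corresponding fibre point (using that (rational) smoothness is local and multiplicative for products); choosing $p$, respectively the fibre point, to be a smooth point of $X^K(v)$, respectively $X^J(u)$, and letting the other vary, yields both directions of the claimed equivalence.
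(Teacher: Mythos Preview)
This theorem is not proved in the present paper; it is quoted verbatim from \cite[Theorem 3.3]{RS14} and used as a black box. There is therefore no in-paper argument to compare your proposal against.

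That said, here is an assessment of your outline on its own merits. The direction (b)$\Rightarrow$(a) via point counting (or Leray--Hirsch) is correct and is the standard argument. For (a)$\Rightarrow$(b), your first two stages are also the right ones: identifying the fibre over $eP_K$ with $X^J(u')$, invoking the combinatorial BP criterion $u'=u$, and then using upper semicontinuity plus $B$-stability to pin down the dimension of every fibre, is exactly the mechanism that distinguishes BP decompositions from arbitrary parabolic decompositions. Your computation that $u^{(x)}=u$ for every $x\le v$ (hence every fibre is isomorphic to $X^J(u)$) is also fine.

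The one place where the argument is genuinely incomplete is your third stage. Trivializing $\pi$ over each Schubert cell $C^\circ_x$ does not by itself yield Zariski-local triviality over $X^K(v)$, because cells are locally closed, not open; and the trivializations of the ambient bundle $\rho$ over translated big cells $x\cdot\Omega_K$ are built from the opposite unipotent $xU_K^{-}x^{-1}$, under which $X^J(w)$ is \emph{not} stable, so there is no reason these trivializations should carry $X^J(w)\cap\rho^{-1}(x\Omega_K)$ to a product $(x\Omega_K\cap X^K(v))\times X^J(u)$. The phrase ``$B$-equivariant patching'' does not resolve this. The argument in \cite{RS14} avoids the issue by constructing the bundle directly: one realises $X^J(w)$ as the image of an associated-bundle construction over $X^K(v)$ with fibre $X^J(u)$ (using the Zariski-locally trivial principal bundle $G\to G/P_K$ restricted to $X^K(v)$), and checks that the BP condition is precisely what makes the multiplication map to $G/P_J$ an isomorphism onto $X^J(w)$ rather than merely a birational surjection. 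You would need either to carry out that construction, or to supply an honest local section of $\pi$ near each point of $X^K(v)$, to close the gap. Your final paragraph on the (rational) smoothness consequence is correct as stated once (b) is in hand.
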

A Grassmannian BP decomposition is a BP decomposition $w = vu$ with respect to
a set $K$ with $|S(w) \setminus K| =1$. The main technical result of \cite{BC12} is:
\begin{prop}[\cite{BC12}]\label{P:BC2}
    Let $W$ be the Weyl group of type $\tilde{A}_n$. If $w\in W$, as an affine permutation avoids $3412$ and
    $4231$ then either $w$ or $w^{-1}$ has a Grassmannian BP decomposition
    $vu$, where both $v$ and $u$ belong to proper parabolic subgroups
    of $\tilde{A}_n$.
\end{prop}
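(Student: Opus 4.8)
The plan is to reduce to understanding affine permutations that avoid $3412$ and $4231$, and to show that for such a permutation $w$ (after possibly replacing $w$ by $w^{-1}$), one can always find a simple reflection $s \in S(w)$ and a set $K = S(w) \setminus \{s\}$ such that $w = vu$ is a BP decomposition with respect to $K$, with $u \in W_K$ and $v \in W^K$. Recall from the discussion after Theorem \ref{T:bp1} that, since $J = \emptyset$, checking that $w = vu$ is a BP decomposition amounts to verifying the combinatorial condition $S(v) \cap K \subseteq D_L(u)$. So the real content is: (i) choosing the right $s$ to split off, and (ii) verifying this support/descent containment. The constraint that $v$ and $u$ both lie in \emph{proper} parabolic subgroups will follow automatically once we pick $s$ so that $S(w) \ne S$, i.e. so that the support of $w$ omits at least one simple reflection of $\tilde A_n$ — and if $S(w) = S$ we must work harder, using pattern avoidance to show $w$ (or $w^{-1}$) is then forced into a very restricted shape.

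First I would set up the combinatorial model: an affine permutation $w \in \tilde{\mathfrak{S}}_n$ is a bijection $w : \Z \to \Z$ with $w(i+n) = w(i)+n$ and $\sum_{i=1}^n (w(i) - i) = 0$, the simple reflections $s_0, \dots, s_{n-1}$ acting by adjacent transpositions (indices mod $n$), and pattern containment defined via the one-line notation over $\Z$. I would recall the relevant structural facts from \cite{BC12}: that avoidance of $3412$ and $4231$ forces strong constraints on where the "large" values and "large" descents of $w$ can occur, and in particular controls the set of simple reflections appearing at the ends of reduced words. The key case division is on whether $w$ has full support $S(w) = S$ or not. If $S(w) \subsetneq S$, then $W_{S(w)}$ is (conjugate to) a finite-type parabolic — a product of type-$A$ pieces along the cycle — and the problem essentially reduces to the finite-type statement, where Grassmannian BP decompositions of $3412$- and $4231$-avoiding permutations are already understood via the Ryan–Wolper/Lakshmibai–Sandhya circle of ideas and \cite{RS14}; I would extract the needed finite-type input and transport it along the identification $W_{S(w)} \cong$ (finite parabolic).

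The genuinely affine case is $S(w) = S$. Here I would argue that pattern avoidance is extremely restrictive: using the $3412$ and $4231$ conditions together with the requirement that every $s_i$ appears in $S(w)$, one shows that either $w$ or $w^{-1}$ must have a unique left descent $s_j$ such that removing it leaves a proper support, or — after translating by an element of the length-zero subgroup (rotation) — $w$ becomes an element whose one-line notation is "almost sorted" with a single long increasing run, so that peeling off the unique maximal coset representative $u \in W_K^{\,}$ for $K = S \setminus \{s_j\}$ works. Concretely, I would show $S(v) \cap K \subseteq D_L(u)$ by choosing $s_j$ to be the index realizing the "spike" forced by pattern avoidance, and checking that $v = w u^{-1}$ has support avoiding the descent-obstruction. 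The main obstacle I anticipate is exactly this affine case: ruling out the possibility that \emph{neither} $w$ nor $w^{-1}$ admits such an $s$ — i.e. proving the dichotomy is exhaustive — which is where one must use the twisted-spiral classification implicitly (twisted spirals are precisely the rationally smooth but non-smooth family, and the $3412/4231$-avoiders are disjoint from them), and carefully track how conjugation by rotations interacts with support and left-descent sets. I would handle it by a direct combinatorial analysis of the finitely many "shapes" of full-support avoiders, leaning on the reduced-word and inversion-set descriptions from \cite{BC12}.
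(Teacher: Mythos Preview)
The paper does not give its own proof of Proposition~\ref{P:BC2}: it is attributed to \cite{BC12}, with a pointer to the proof of Theorem~3.1 and the discussion before Corollary~7.1 there. So there is no argument in the present paper to compare your proposal against; the proposition is used as a black box imported from Billey--Crites.

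As for your sketch on its own merits: the case split on whether $S(w)$ is proper is the right shape, and the reduction to finite type $A$ when $S(w)\subsetneq S$ is fine (this is exactly \cite[Theorem~6.1]{RS14}, as the paper notes in the proof of Proposition~\ref{P:affine_onesided}). But the full-support case has real gaps. First, ``translating by an element of the length-zero subgroup (rotation)'' is not available inside the affine Weyl group $\tilde{\mathfrak S}_n$ itself: the rotations are outer automorphisms (elements of the extended affine Weyl group), and conjugating by them changes which Schubert variety you are looking at, so you must be careful what statement you are actually proving after such a shift. Second, and more seriously, your plan to ``use the twisted-spiral classification implicitly'' to rule out the bad case is circular in the Billey--Crites development: Proposition~\ref{P:BC2} is one of the ingredients in \cite{BC12} that goes into proving Theorem~\ref{T:BC1}, so you cannot invoke the dichotomy ``pattern-avoiders versus twisted spirals'' to establish it. What \cite{BC12} actually do in the full-support case is a direct combinatorial analysis of the one-line notation of a $3412/4231$-avoiding affine permutation---locating a position where the code/inversion structure forces a maximal parabolic factorization---and that is the substantive content you would need to supply rather than appeal to downstream consequences.
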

Proposition \ref{P:BC2} is proved implicitly in \cite{BC12}; in particular, see
the proof of Theorem 3.1, and the discussion before Corollary 7.1 in \cite{BC12}.
The following extension of Proposition \ref{P:BC2} shows that we don't need to
look at $w^{-1}$ to find a BP decomposition:
\begin{prop}\label{P:affine_onesided}
    Let $W$ be a Weyl group of type $\tilde{A}_n$. If $w\in W$ avoids both
    $3412$ and $4231$, then $w$ has a Grassmannian BP decomposition $w=vu$
    where both $v$ and $u$ belong to proper parabolic subgroups of
    $\tilde{A}_n$.

    Furthermore, one of the following is true:
    \begin{enumerate}[(a)]
    \item $w$ is the maximal element of a parabolic proper subgroup of $\tilde{A}_n$ or,
    \item $w=vu$ is a BP decomposition with respect to $S(w) \setminus \{s\}$, for some $s
    \not\in D_R(w)$.
    \end{enumerate}
\end{prop}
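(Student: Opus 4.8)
The plan is to bootstrap Proposition~\ref{P:affine_onesided} from Proposition~\ref{P:BC2}, using the combinatorial characterization of BP decompositions (the condition $S(v)\cap K\subseteq D_L(u)$) together with the symmetry between $w$ and $w^{-1}$ in affine type $\tilde A$. Proposition~\ref{P:BC2} already gives a Grassmannian BP decomposition for either $w$ or $w^{-1}$; the first thing to do is to observe that $X(w)$ and $X(w^{-1})$ are abstractly isomorphic as varieties (the inverse map is a poset automorphism of Bruhat order, fixing length), so for the purposes of establishing a fibre-bundle structure there is no loss in working with whichever of the two has the decomposition. However, since the statement is phrased intrinsically in terms of $w$, I would instead argue directly that $w$ itself always has such a decomposition. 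The key point is that a Grassmannian BP decomposition of $w^{-1}$ relative to $K$ — i.e.\ a left decomposition $w^{-1}=v'u'$ with $|S(w^{-1})\setminus K|=1$ — transposes, upon inverting, to a \emph{right} BP decomposition of $w$: $w=(u')^{-1}(v')^{-1}$, where now $(v')^{-1}$ lies in $W_K$ and $(u')^{-1}\in {}^JW_K$. Using the right-handed analogue of the combinatorial criterion ($S(v)\cap K\subseteq D_R(u)$ for right decompositions, or equivalently applying the left criterion to the inverse), one checks this is still a Grassmannian BP decomposition with the same set $K$, and both factors still lie in proper parabolic subgroups because $S(w)=S(w^{-1})^{-1}$ has the same cardinality constraint. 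So in all cases $w$ has a Grassmannian BP decomposition with both factors in proper parabolics.

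For the ``Furthermore'' dichotomy, I would case on whether the set $K = S(w)\setminus\{s\}$ appearing in the decomposition can be chosen with $s\notin D_R(w)$. The mechanism is this: among all simple reflections $s\in S(w)$, either there is at least one with $s\notin D_R(w)$, in which case I want to show one can take the BP decomposition relative to $K=S(w)\setminus\{s\}$ for such an $s$ — this is case (b) — or else \emph{every} $s\in S(w)$ lies in $D_R(w)$, which forces $w$ to be the maximal element $w_0^{S(w)}$ of the parabolic subgroup $W_{S(w)}$ (a standard fact: an element whose right descent set is all of $S(w)$, combined with $S(w)$ being its support, is the longest element of that parabolic, which is finite precisely when $W_{S(w)}$ is a proper parabolic of $\tilde A_n$) — this is case (a). The remaining work is to verify that when there exists $s\notin D_R(w)$, the Grassmannian BP decomposition provided above (possibly relative to a different $K'=S(w)\setminus\{s'\}$) can be replaced by one relative to $K=S(w)\setminus\{s\}$. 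Here I would use the flexibility in choosing the reflection to remove: the BP criterion $S(v)\cap K\subseteq D_L(u)$ is monotone enough that removing a non-descent generator keeps things BP, perhaps after passing through an intermediate parabolic chain, and I would cite or re-derive the relevant closure property of BP decompositions from \cite{RS14} (e.g.\ the fact that BP decompositions compose and that one can always ``slide'' the Grassmannian index to a chosen non-descent).

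The main obstacle I anticipate is precisely this last step — showing that the Grassmannian index $s$ in case (b) can be taken to be an arbitrary non-right-descent of $w$ (or at least that \emph{some} valid choice of $s\notin D_R(w)$ works), rather than being dictated by the particular decomposition coming out of \cite{BC12}. The decomposition in Proposition~\ref{P:BC2} is extracted from the proof machinery of Billey--Crites and may a priori pick out a reflection lying in $D_R(w)$, or may only be available for $w^{-1}$ with an index that, after inverting, becomes a \emph{left} non-descent of $w$ rather than a right one. Reconciling these — moving between left and right descents, and between $w$ and $w^{-1}$, while preserving the Grassmannian-ness and the properness of the parabolics — is the technical heart of the argument, and I expect it will require a careful analysis of the affine permutation combinatorics (tracking where the pattern-avoidance hypotheses on $3412$ and $4231$ are used to guarantee the support $S(w)$ is a proper, hence finite-type, subset of the affine Dynkin diagram in the relevant cases).
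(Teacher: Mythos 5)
Your first step — passing from the Billey--Crites decomposition of $w^{-1}$ to a decomposition of $w$ by simply inverting — has a genuine gap. Inverting $w^{-1}=v'u'$ (with $v'\in W^K$, $u'\in W_K$) yields $w=(u')^{-1}(v')^{-1}$ where $(u')^{-1}\in W_K$ and $(v')^{-1}\in {}^K W$ (you have the coset memberships swapped in your write-up, but set that aside). This is a ``right-handed'' factorization $w=uv$ satisfying $S(v)\cap K\subseteq D_R(u)$; it is not a BP decomposition of $w$ in the sense the proposition requires, which is a factorization $w=vu$ with $v\in W^K$ and $u\in W_K$ satisfying $S(v)\cap K\subseteq D_L(u)$. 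Your ``one checks this is still a Grassmannian BP decomposition with the same set $K$'' is circular: the right-handed condition is just a restatement that $w^{-1}$ has a BP decomposition, which is the hypothesis, not the conclusion. The paper's actual mechanism for making this conversion is a two-step ``slide'': starting from $w=uv$ with $u\in W_K$, it applies the finite-type theorem (\cite[Theorem 6.1]{RS14}) to the type-$A$ element $u$ to produce a genuine left BP decomposition $u=v'u'$ with respect to $K'=K\setminus\{s'\}$ for an $s'\notin S(v)$, and then verifies $w=v'(u'v)$ is the parabolic decomposition of $w$ with respect to $K'$ and that $S(v')\cap K'\subseteq D_L(u')\subseteq D_L(u'v)$, so it is BP. This intermediate decomposition of $u$ is the missing idea.

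For the ``Furthermore'' clause, you correctly identify that the crux is forcing the removed reflection $s'$ to lie outside $D_R(w)$, but you punt on how to achieve it, speculating about ``sliding the Grassmannian index.'' In the paper this drops out of the same two-step construction: since $s'\notin S(v)$ and $u'v$ is a reduced factorization, one shows $s'\notin D_R(u)$ implies $s'\notin D_R(w)$ (when $u'$ is non-maximal, choose $s'\notin D_R(u)$ directly), and when $u'$ is maximal one chooses $s'\in K\setminus S(v)$ adjacent to $S(v)$ and invokes \cite[Lemma 6.4]{RS14}. Your dichotomy — either all of $S(w)$ is in $D_R(w)$ (whence $w$ is the longest element of a finite parabolic) or some $s\notin D_R(w)$ exists — is too coarse on its own: the mere existence of some non-descent does not by itself tell you that the decomposition can be arranged relative to it. The choice of $s'$ must come out of the construction, which is precisely why the slide through $u$ is needed.
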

The proof is similar to that of \cite[Theorem 6.1]{RS14}; for the convenience
of the reader, we give a complete proof for the $\tilde{A}_n$ case.
\begin{proof}
    If $S(w)$ is a proper subset of $S$, then $W_{S(w)}$ is finite of type $A$,
    and the proposition is exactly Theorem 6.1 of \cite{RS14}. Hence, we
    assume that $S(w) = S$ throughout.  By Proposition \ref{P:BC2}, it suffices to prove that if $w^{-1}$ has a Grassmannian BP decomposition with both factors belonging to proper parabolic subgroups, then $w$ has a Grassmannian BP decomposition with respect to some $K = S \setminus \{s\}$ where $s\notin D_R(w)$.  Assume that $w^{-1}$ has a Grassmannian BP decomposition with both factors belonging to proper parabolic subgroups.  Hence there is a subset $K = S
    \setminus \{s\}$ such that $w = uv$ with $u \in W_K$, $v \in {}^K W$, and
    $S(v) \cap K \subseteq D_R(u)$, the right descent set of $u$. Furthermore,
    $S(v)$ is a proper subset of $S = S(w)$; consequently, $S(u) = K$ and
    $K \setminus S(v)$ is non-empty.

    Next, we claim that $u$ has a Grassmannian BP decomposition $u = v'u'$ with
    respect to some $K' = S \setminus \{s'\}$, where $s' \not\in S(v)$. Indeed,
    $u$ is rationally smooth of type $A$. If $u$ is the maximal element of
    $W_K$ then we can take $s'$ to be any element of $K \setminus S(v)$. If $u$
    is not maximal, then $u$ has a Grassmannian BP decomposition $u = v' u'$
    with respect to $K' = K \setminus \{s'\}$, where $s' \not\in D_R(u)$
    \cite[Theorem 6.1]{RS14}, and hence $s' \not\in S(v)$.

    Since $s' \not\in S(v)$, we have $w = v' (u' v)$ is the parabolic decomposition
    of $w$ with respect to $K'$. Since $v'u'$ is a BP decomposition, and $u' v$
    is reduced, we have
    $$S(v')\cap K' \subseteq D_L(u') \subseteq D_L(u' v),$$
    and thus $w = v' (u' v)$ is a BP decomposition. Finally $S(v') \subseteq K \subsetneq
    S$, completing the proof of the first part of the proposition.

    We now show we can choose $s'\notin D_R(w)$.  First, if $u'$ is not maximal, then choose a
    Grassmannian BP decomposition with respect to $s' \not\in D_R(u)$. Since
    $s' \not\in S(v)$, we get that $s' \not\in D_R(w)$ as well
    (this follows, for instance, from the fact that if $a\neq b \in D_R(w)$,
    then $b \in D_R(wa)$). If $u'$ is instead the maximal element, then we can
    take $s' \in K \setminus S(v)$ such that $s'$ is adjacent (in the Dynkin
    diagram) to some element of $S(v)$. It follows from \cite[Lemma 6.4]{RS14}
    that $s' \not\in D_R(w)$.
\end{proof}
\begin{cor}\label{C:affine_onesided}
    Suppose $w \in W^J$, where $W$ is the Weyl group of type $\tilde{A}_n$.
    If $X^J(w)$ is a smooth Schubert variety and $S(w) \setminus J \neq
    \emptyset$, then $w$ has a Grassmannian BP decomposition with respect to
    some $J \subseteq K \subsetneq S(w)$, in which each factor belongs to a
    proper parabolic subgroup of $W$.
\end{cor}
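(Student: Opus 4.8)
The plan is to reduce Corollary~\ref{C:affine_onesided} to Proposition~\ref{P:affine_onesided} by passing from the parabolic quotient $W^J$ to the full group $W$, and then transporting a BP decomposition back down along $J$. First I would use Theorem~\ref{T:bp1} to turn the smoothness hypothesis into a usable form: since $X^J(w)$ is smooth, I want to produce an element $\tilde w \in W$ (lying over $w$ in a suitable sense) for which $X(\tilde w)$ is also smooth, so that Theorem~\ref{T:main1} and hence Proposition~\ref{P:affine_onesided} apply. The natural candidate is $\tilde w := w w_J$, where $w_J$ is the longest element of the (finite, type $A$) parabolic $W_J$; one has $\ell(\tilde w) = \ell(w) + \ell(w_J)$, the decomposition $\tilde w = w\, w_J$ is automatically a BP decomposition relative to $\emptyset$ (since $S(w_J) = J \subseteq D_L(w_J)$ trivially, using the support/descent criterion quoted after Theorem~\ref{T:bp1} in the form for ${}^J$-cosets, or directly from Theorem~\ref{T:bp1} applied with $K=J$), and $X(\tilde w)$ is a fibre bundle over $X^J(w)$ with fibre $X(w_J)$. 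Since $X(w_J)$ is a full flag variety of finite type $A$, it is smooth, so $X(\tilde w)$ is smooth iff $X^J(w)$ is smooth; thus $X(\tilde w)$ is smooth, and by Theorem~\ref{T:main1} the affine permutation $\tilde w$ avoids $3412$ and $4231$.

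Next I would apply Proposition~\ref{P:affine_onesided} to $\tilde w$. Note $S(\tilde w) = S(w)$, since adjoining $w_J$ only adds reflections from $J \subseteq S(w)$. If $S(\tilde w) \subsetneq S$ we are already in the finite-type-$A$ situation and may invoke \cite[Theorem 6.1]{RS14} directly (as in the first line of the proof of Proposition~\ref{P:affine_onesided}); otherwise Proposition~\ref{P:affine_onesided} gives a Grassmannian BP decomposition $\tilde w = v u$ with respect to some $K = S(w) \setminus \{s\}$, $s \notin D_R(\tilde w)$, with $v, u$ in proper parabolics — unless $\tilde w$ is the maximal element of a proper parabolic $W_L$. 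The latter case must be handled separately: if $\tilde w = w_L$ with $L \subsetneq S(w)$, then since $J \subseteq L$ and $w$ is the minimal ${}^J$-coset representative with $w w_J = w_L$, one gets $w = w_L w_J$, the maximal element of $W_L^J$ (or rather the maximal element of ${}^J W_L$), and picking any $s \in L \setminus J$ not... — more carefully, one picks $s \in S(w)\setminus J$ (which is non-empty by hypothesis) and checks that $K := S(w)\setminus\{s\}$ gives a Grassmannian BP decomposition of $w$ relative to $J$ with both factors in proper parabolics; this is a direct computation with the support/descent criterion since the parabolic factors of $w_L$ are again maximal elements of type-$A$ parabolics.

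The main step is then to descend the BP decomposition of $\tilde w$ (relative to $\emptyset$) to a BP decomposition of $w$ (relative to $J$). Here I would use the chosen reflection $s \notin D_R(\tilde w)$. Since $w_J$ is maximal in $W_J$, $D_R(w_J) = J$, so $s \notin J$; hence $s \notin D_R(\tilde w)$ together with $\tilde w = w w_J$ forces $s$ to play the same role for $w$, and in fact $K = S(w)\setminus\{s\} \supseteq J$. Writing $\tilde w = v u$ with $v \in W^K$, $u \in W_K$, I claim the factor $u$ absorbs $w_J$: since $J \subseteq K$, one has $w_J \in W_K$, and by uniqueness of parabolic decompositions $u = u' w_J$ for the minimal-${}^J$ representative $u'$... — concretely, $w = v u'$ where $u' \in W_K^J$, and the BP condition $S(v) \cap K \subseteq D_L(u)$ for $\tilde w$ passes to $S(v)\cap K \subseteq D_L(u')$ for $w$ because $D_L$ is read off from the support criterion and is insensitive to right-multiplying $u'$ by $w_J \in W_J$ when checking containment against $S(v) \cap K$. (This is precisely the kind of bookkeeping carried out in the proof of Proposition~\ref{P:affine_onesided} when it passes from $w = uv$ to $w = v'(u'v)$, and I would model the argument on that.) Finally $S(v) \subseteq K \subsetneq S(w)$ and $S(u') \subseteq K \subsetneq S(w)$, so both factors lie in proper parabolics of $W$, completing the proof.

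The hard part will be the descent step: making precise that the Grassmannian BP decomposition of $\tilde w = w w_J$ restricts to one of $w$ relative to $J$, i.e.\ that the "extra" factor $w_J$ gets cleanly absorbed into the $W_K$-part without disturbing the BP condition or the properness of the supports. One must be careful that $K \supseteq J$ (which is why the condition $s \notin D_R(\tilde w)$, hence $s \notin J$, is essential) and that the resulting $u' = v^{-1} w \in W_K^J$ still has $S(v) \cap K \subseteq D_L(u')$; both follow from the support/descent reformulation of BP decompositions relative to $J$ quoted in the text, but the verification requires tracking how supports and left descents behave under the parabolic factorization.
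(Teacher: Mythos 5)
Your overall strategy matches the paper's: multiply $w$ on the right by the longest element of $W_{J \cap S(w)}$ (you write $w_J$, which is the same once one uses that the statement implicitly assumes $J\subseteq S(w)$), observe that $X(\tilde w)$ is a smooth fibre bundle over $X^J(w)$ so $\tilde w$ avoids $3412$ and $4231$, apply Proposition~\ref{P:affine_onesided} or handle the maximal case, and then descend the Grassmannian BP decomposition of $\tilde w$ to one of $w$ relative to $J$. The observation that $s\notin D_R(\tilde w)\supseteq D_R(w_J)=J$ forces $J\subseteq K$ is also exactly as in the paper.

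The concrete error is in your justification of the descent step. You claim that the BP condition $S(v)\cap K\subseteq D_L(u)$ for $\tilde w=vu$ ``passes to $S(v)\cap K\subseteq D_L(u')$'' because $D_L$ is insensitive to right-multiplying $u'$ by $w_J$. That is false: from $u=u'w_J$ reduced one only gets $D_L(u')\subseteq D_L(u)$, and the inclusion can be strict in exactly the problematic direction. For example in $S_5$ with $J=\{s_1\}$, $K=\{s_1,s_2\}$, $v=s_2s_3$, $u'=s_1s_2$, $u=u'w_J=s_1s_2s_1$: here $\tilde w=vu$ is a Grassmannian BP decomposition with $S(v)\cap K=\{s_2\}\subseteq D_L(u)=\{s_1,s_2\}$, but $D_L(u')=\{s_1\}$, so $S(v)\cap K\not\subseteq D_L(u')$. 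The deeper point is that $S(v)\cap K\subseteq D_L(u)$ is the BP criterion \emph{only} for $J=\emptyset$, as the paper is careful to say; the criterion for BP decompositions relative to $J$ is genuinely different, and in this example $w=vu'$ \emph{is} a BP decomposition relative to $J$ even though it fails the $J=\emptyset$ test. The correct descent — and what the paper's ``it is easy to see'' points at — is via Poincar\'e polynomial multiplicativity (or \cite[Lemma 4.3]{RS14}, as used in the proof of Corollary~\ref{C:fibrebundle}): both $\tilde w=w\cdot w_J$ and $u=u'\cdot w_J$ are BP decompositions with respect to $J$ relative to $\emptyset$, so $P_{\tilde w}=P^J_w\cdot P_{w_J}$ and $P_u=P^J_{u'}\cdot P_{w_J}$; substituting into $P_{\tilde w}=P^K_v\cdot P_u$ and cancelling $P_{w_J}$ yields $P^J_w=P^K_v\cdot P^J_{u'}$, which is exactly the statement that $w=vu'$ is a BP decomposition relative to $J$. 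Replace your descent reasoning with this, fix the garbled case discussion when $\tilde w$ is maximal (there $L=S(\tilde w)=S(w)\subsetneq S$, not $L\subsetneq S(w)$), and the argument is sound.
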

\begin{proof}
    Let $u_0$ be the maximal element of $W_{J \cap S(w)}$.
    Then $w' := wu_0$ is a BP decomposition for $w'$ with respect to $J$.  By
    Theorem \ref{T:bp1}, $X(w')$ is a fibre bundle over $X^J(w)$ with fibre
    $X(u_0)$. Since $X(u_0)$ is a full flag variety of type $A$ (and hence is smooth), it follows that
    $X(w')$ is smooth. By Theorem \ref{T:BC1}, $w'$ must avoid $3412$ and
    $4231$. We claim that $w'$ has a Grassmannian BP decomposition $w' = vu'$
    with respect to some $K = S(w) \setminus \{s\}$ such that $s \not\in J$.
    Indeed, if $w'$ is the maximal element of $W_{S(w')}$, then $w'$ has a
    Grassmannian BP decomposition with respect to any $s \in S(w')$, including
    any element of $S(w) \setminus J$. If $w'$ is not maximal, then by
    Proposition \ref{P:affine_onesided}, $w'$ has a Grassmannian BP decomposition
    where $s \not\in D_R(w')$, and since $D_R(w')$ contains $S(u_0) = J \cap S(w)$, we
    conclude that $s \not\in J$.

    Since $s \not\in J$, we conclude that $K = S \setminus \{s\}$ contains $J$,
    and thus that $u' = u u_0$ for some $u \in W_{K}^J$. It is easy to see that
    $w = v u$ is a BP decomposition of $w$ with respect to $K$, as desired.
\end{proof}

When combined with Theorem \ref{T:bp1}, Corollary \ref{C:affine_onesided}
implies that every smooth element in type $\tilde{A}$ has a complete BP
decomposition in the following sense:
\begin{defn}[\cite{RS15}]\label{D:complete}
    Let $W$ be a Coxeter group. A \emph{complete BP decomposition of $w \in W^J$
    (with respect to $J$)} is a factorization $w = v_1 \cdots v_m$, where
    $m = |S(w) \setminus J|$, and, if we let $u_i := v_i \cdots v_m \in W^J$,
    then $u_i = v_i u_{i+1}$ is a Grassmannian BP decomposition with respect to
    $K_i := S(u_{i+1}) \cup J$ for all $1 \leq i < m$.

    A complete \emph{maximal} BP decomposition $w = v_1 \cdots v_m$ is a complete
    BP decomposition $w = v_1 \cdots v_m$ as above such that $v_i$ is maximal
    in $W_{K_{i-1} \cap S(v_i)}^{S(v_i)}$ for all $1  \leq i \leq m$.
\end{defn}

Hence (and similarly to \cite[Corollary 3.7]{RS14}), Theorem \ref{T:bp1} and
Corollary \ref{C:affine_onesided} imply that a Schubert variety $X^J(w)$ of
type $\tilde{A}_n$ is smooth if and only if it is an iterated fibre bundle of
Grassmannians.
\begin{cor}\label{C:fibrebundle}
    A Schubert variety $X^J(w)$ in a partial flag variety of type $\tilde{A}_n$
    is smooth if and only if there is a sequence
    \begin{equation*}
        X^J(w) = X_m \arr X_{m-1} \arr \cdots \arr X_{1} \arr X_{0} = \text{pt},
    \end{equation*}
    where each map $X_{i} \arr X_{i-1}$ is a Zariski-locally trivial fibre bundle
    whose fibre is a Grassmannian variety of type $A$.
\end{cor}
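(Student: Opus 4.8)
The plan is to prove both directions by induction on $|S(w) \setminus J|$, using Theorem \ref{T:bp1} to translate between BP decompositions and fibre bundle structures, and Corollary \ref{C:affine_onesided} to produce the needed BP decomposition in the forward direction.

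For the forward direction, suppose $X^J(w)$ is smooth. If $S(w) \setminus J = \emptyset$, then $w \in W_J$ so $X^J(w)$ is a point and the sequence is trivial. Otherwise, apply Corollary \ref{C:affine_onesided}: $w$ has a Grassmannian BP decomposition $w = vu$ with respect to some $J \subseteq K \subsetneq S(w)$, where $v \in W^K$ and $u \in W_K^J$ both lie in proper parabolic subgroups. By Theorem \ref{T:bp1}, the projection $\pi : X^J(w) \arr X^K(v)$ is a Zariski-locally trivial fibre bundle with fibre $X^J(u)$, and moreover $X^J(u)$ and $X^K(v)$ are both smooth. Now $X^K(v)$ is a Schubert variety in a partial flag variety of type $\tilde{A}_n$ with $|S(v) \setminus K| = 1$ (since the decomposition is Grassmannian and $S(v) \subseteq K \cup \{s\}$ with $s \in S(w) \setminus K$ in the support of $v$), so $X^K(v)$ is itself a smooth Grassmannian Schubert variety of finite type $A$ — here one uses that $W_{S(v)}$ is a finite parabolic of type $A$ and that a smooth Grassmannian Schubert variety of type $A$ is a (smooth, hence honest) Grassmannian variety. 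On the other hand $X^J(u)$ is a smooth Schubert variety in a partial flag variety of type $\tilde A_n$ with strictly smaller $|S(u) \setminus J| < |S(w) \setminus J|$ (because $K \subsetneq S(w)$ means at least one simple reflection of $S(w) \setminus J$ is absent from $S(u) \setminus J$); wait — more carefully, $u \in W_K^J$ and $S(w) \setminus K$ is nonempty, so $|S(u) \setminus J| \le |K \setminus J| < |S(w) \setminus J|$. By the inductive hypothesis, $X^J(u) = Y_{m-1} \arr \cdots \arr Y_0 = \text{pt}$ is an iterated Grassmannian fibre bundle. Since $\pi$ is a Zariski-locally trivial fibre bundle with fibre $X^J(u)$, one can pull back this tower: set $X_m := X^J(w)$, $X_{m-1} := X^K(v)$ (a single Grassmannian over a point), and over $X^K(v)$ build the intermediate spaces as the associated fibre bundles with fibre $Y_i$; each stage $X_i \arr X_{i-1}$ is then Zariski-locally trivial with Grassmannian fibre. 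This gives the desired sequence.

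For the converse, suppose such a sequence exists. Each Grassmannian variety of type $A$ is smooth, and a Zariski-locally trivial fibre bundle with smooth fibre over a smooth base is smooth; so by induction up the tower, starting from the point $X_0$, each $X_i$ is smooth, and in particular $X^J(w) = X_m$ is smooth.

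The main obstacle — and the point requiring the most care — is the bookkeeping in the forward direction: one must verify that $X^K(v)$ really is a genuine Grassmannian variety of type $A$ (not merely an abstract smooth Grassmannian Schubert variety), and that the iterated fibre bundle tower for the fibre $X^J(u)$ can be promoted to a tower over $X^K(v)$ with the bundle maps remaining Zariski-locally trivial with Grassmannian fibres. This is where the "iterated fibre bundle" language needs to be handled correctly: a fibre bundle whose fibre is itself an iterated fibre bundle is an iterated fibre bundle, provided local triviality is compatible, which follows from the Zariski-local triviality at each stage. The rest is a routine unwinding of Definition \ref{D:complete} together with Theorem \ref{T:bp1}, entirely parallel to \cite[Corollary 3.7]{RS14}.
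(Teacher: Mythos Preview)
Your backward direction and the overall inductive strategy (Corollary~\ref{C:affine_onesided} plus Theorem~\ref{T:bp1}) match the paper. The gap is in how you build the tower in the forward direction.

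Your indexing is inconsistent: you set $X_{m-1} := X^K(v)$, but then $X_m \to X_{m-1}$ would have fibre $X^J(u)$, which is \emph{not} a Grassmannian. What you presumably mean is $X_1 := X^K(v)$, with $X_2,\dots,X_m$ built above it. More seriously, the sentence ``a fibre bundle whose fibre is itself an iterated fibre bundle is an iterated fibre bundle, provided local triviality is compatible, which follows from the Zariski-local triviality at each stage'' is not a proof. To form the associated bundles $X_i$ with fibres $Y_i$ you need the transition functions of $X^J(w) \to X^K(v)$ to act on each $Y_i$ compatibly with the maps $Y_i \to Y_{i-1}$; Zariski-local triviality alone does not give this. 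It is true in this setting (the transition functions come from the $P_K$-action, and the $Y_i$ are $P_K$-stable Schubert varieties), but that requires an argument you have not supplied.

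The paper avoids this issue entirely by a different, more explicit construction. Rather than building associated bundles over $X^K(v)$, it first extracts a \emph{complete} BP decomposition $w = v_1 \cdots v_m$, sets $w_i := v_1 \cdots v_i$ and $K_i := S(u_{i+1}) \cup J$, and then invokes \cite[Lemma~4.3]{RS14} to show that each $w_i = w_{i-1} v_i$ is a BP decomposition with respect to $K_{i-1}$ (relative to $K_i$). This lets one take $X_i := X^{K_i}(w_i)$ as an honest Schubert variety, so that Theorem~\ref{T:bp1} applies directly at every stage and the fibre $X^{K_i}(v_i)$ is a type~$A$ Grassmannian. No associated-bundle machinery is needed.
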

\begin{proof}
    A Zariski-locally trivial fibre bundle with a smooth base and fibre is
    itself smooth, so it follows that if $X^J(w)$ is an iterated fibre bundle
    in the above sense, then $X^J(w)$ is smooth.

    For the converse, let $W$ be the Weyl group of type $\tilde{A}_n$, and
    suppose that $X^J(w)$ is smooth, where $w \in W^J$. If $S(w) \subseteq J$,
    then $X^J(w)$ is a point, and the theorem is trivial. If $|S(w) \setminus J| =
    m \geq 1$, then by Corollary \ref{C:affine_onesided}, $w$ has a complete
    BP decomposition $w = v_1 \cdots v_m$ in which $S(v_i)$ is a strict subset
    of $S$. Let $u_i := v_{i} \cdots v_m \in W^J$, and let $w_i := v_1 \cdots v_i$,
    so that $u_1 = w_m = w$ and $u_{m+1} = w_0 = e$. In addition, set
    $K_i := S(u_{i+1}) \cup J$, so that
    \begin{itemize}
        \item $J = K_m \subsetneq K_{m-1} \subsetneq \cdots \subsetneq K_1 \subsetneq K_0 := S(w)$,
        \item $|K_{i-1} \setminus K_i|=1$,
        \item $v_i \in W_{K_{i-1}}^{K_i}$,
        \item $u_i = v_i u_{i+1}$ is a BP decomposition with respect to $K_i$ (and relative to $J$).
    \end{itemize}
    By Theorem \ref{T:bp1}, $X^{K_i}(v_i)$ is smooth for all $1 \leq i \leq m$.
    Since $S(v_i)$ is a strict subset of $S$, it follows that $X^{K_i}(v_i)$ is
    a smooth Grassmannian Schubert variety of finite type $A$. But it is
    well-known (see for instance \cite{BL00}) that the only smooth Grassmannian
    Schubert varieties of finite type $A$ are themselves Grassmannians
    (specifically, $v_i$ must be maximal in $W_{S(v_i)}^{K_i \cap S(v_i)}$,
    so $w = v_1 \cdots v_m$ is a complete maximal BP decomposition).
    By \cite[Lemma 4.3]{RS14}, $w_i = w_{i-1} v_i$ is a BP decomposition with respect
    to $K_{i-1}$ (and relative to $K_i$), so if we set $X_i = X^{K_i}(w_i)$,
    then by Theorem \ref{T:bp1} the standard projection $X_i \arr X_{i-1}$ is a
    Zariski-locally trivial fibre bundle with fibre $X^{K_i}(v_i)$.
\end{proof}

\begin{proof}[Proof of Theorem \ref{T:main1}]
    If $X(w)$ is smooth, then $w$ avoids $3412$ and $4231$ by Theorem
    \ref{T:BC1}, so part (a) implies part (b).

    Suppose $w$ avoids $3412$ and $4231$. If $w = vu$ is a
    parabolic decomposition, then $u$ also avoids $3412$ and $4231$ \cite[Lemma
    3.10]{BC12}. Thus Proposition \ref{P:affine_onesided} implies that $w$ has a complete BP decomposition, in
    which every factor belongs to a proper parabolic subgroup of $W$. Every
    rationally smooth Grassmannian Schubert variety in finite type $A$ is
    smooth by the Carrell-Peterson theorem \cite{CK03}, and hence a Grassmannian.
    Thus the proof of Corollary \ref{C:fibrebundle} implies that $X(w)$ is an
    iterated fibre bundle of Grassmannians.  Hence part (b) implies part (c).

    Similarly, if $X(w)$ is an iterated fibre bundle of Grassmannians, then
    $X(w)$ is smooth as in the proof of Corollary \ref{C:fibrebundle}, so part (c)
    implies part (a).
\end{proof}

We finish the section by looking at rationally smooth Schubert varieties,
starting with the twisted spiral permutations. Suppose $W$ has type
$\tilde{A}_n$, and let $S = \{s_0,\ldots,s_{n-1}\}$, where $s_i$ is the simple
reflection corresponding to node $i$ in the Dynkin diagram of $\tilde{A}_n$ as
shown in Figure \ref{F:dynkin1}.
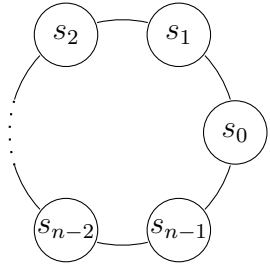
\begin{figure}

\begin{tikzpicture}
\def \radius {1.5cm}
\def \margin {17} 
\foreach \s in {1,...,6}
{ 
  \draw[-] ({60 * (\s - 1)+\margin}:\radius)
    arc ({60 * (\s - 1)+\margin}:{60 * (\s)-\margin}:\radius);
}
  \node[draw, circle, minimum size=24pt] at ({60*0}:\radius) {$s_{0}$};
  \node[draw, circle, minimum size=24pt] at ({60*1}:\radius) {$s_{1}$};
  \node[draw, circle, minimum size=24pt] at ({60*2}:\radius) {$s_{2}$};
  \node[draw, circle, minimum size=24pt] at ({60*4}:\radius) { };
  \node at ({60 * 4}:\radius) {$s_{n-2}$};
  \node[draw, circle, minimum size=24pt] at ({60*5}:\radius) {};
  \node at ({60 * 5}:\radius) {$s_{n-1}$};
  \draw[-, thick, loosely dotted] ({60*3 + \margin}:\radius) arc ({60*3 + \margin}:{60*3 - \margin}:\radius);
\end{tikzpicture}
    \caption{The Dynkin diagram of type $\tilde{A}_n$}
    \label{F:dynkin1}
\end{figure}
Given $0 \leq i < n$ and $k \geq 1$, define
\begin{equation*}
    x(i,k) = s_{i+k-1} s_{i+k-2} \cdots s_{i} \text{ and }
        y(i,k) = s_{i-k+1} s_{i-k+2} \cdots s_i,
\end{equation*}
where the indices of the $s_j$'s are interpreted modulo $n$. Both $x(i,k)$ and
$y(i,k)$ belong to $W^{S \setminus \{s_i\}}$. A \emph{spiral permutation} is an
element of $W$ of the form $x(i,k(n-1))$ or $y(i,k(n-1))$ for some $k \geq
2$.\footnote{Note that the length of these elements is a multiple of $n-1$,
even though the rank of $W$ is $n$. In particular, if $k=1$ then these elements
belong to a parabolic subgroup of finite type $A$, so we exclude this case.}
The spiral permutations were first studied by Mitchell \cite{Mi86}, who showed
that the corresponding Grassmannian Schubert varieties, called \emph{spiral
Schubert varieties}, are rationally smooth.

A \emph{twisted spiral permutation} is an element of $W$
of the form $w = vu$, where $v$ is a spiral permutation $x(i,k(n-1))$ or
$y(i,k(n-1))$, and $u$ is the maximal element of $W_{S \setminus \{s_i\}}$.
Note that $w = vu$ is a BP decomposition with respect to $J = S \setminus \{s_i\}$.
In addition, $D_R(w)$ contains $D_R(u)=S
\setminus \{s_i\}$, and since $\tilde{A}_n$ is infinite, $D_R(w)$ cannot be equal
to $S$, so $D_R(w)$ must be equal to $S \setminus \{s_i\}$. Thus we get
a version of Proposition \ref{P:affine_onesided} for all rationally smooth
elements of $W$: if $X(w)$ is rationally smooth, then either 
\begin{itemize}
    \item $w$ is the maximal element of a proper parabolic subgroup of $W$, or 
    \item $w$ has a Grassmannian BP decomposition with respect to $K = S(w) \setminus \{s\}$, where $s \not\in D_R(w)$. 
\end{itemize}
This means that we can repeat the proofs of Corollaries
\ref{C:affine_onesided} and \ref{C:fibrebundle} to get:
\begin{cor}\label{C:fibrebundle2}
    A Schubert variety $X^J(w)$ in a partial flag variety of type $\tilde{A}_n$
    is rationally smooth if and only if there is a sequence
    \begin{equation*}
        X^J(w) = X_m \arr X_{m-1} \arr \cdots \arr X_{1} \arr X_{0} = \text{point},
    \end{equation*}
    where each map $X_{i} \arr X_{i-1}$ is a Zariski-locally trivial
    fibre bundle whose fibre is a rationally smooth Grassmannian Schubert
    variety of type $A$ or $\tilde{A}$.
\end{cor}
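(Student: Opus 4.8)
The plan is to mimic the proof strategy already established for the smooth case (Corollaries \ref{C:affine_onesided} and \ref{C:fibrebundle}), but replace the input ``$w$ avoids $3412$ and $4231$'' with the weaker hypothesis ``$X(w)$ is rationally smooth'', using the dichotomy from Theorem \ref{T:BC1}. The text immediately preceding the statement already does the crucial work: it establishes that if $X(w)$ is rationally smooth, then either $w$ is the maximal element of a proper parabolic subgroup of $W$, or $w$ has a Grassmannian BP decomposition with respect to $K = S(w)\setminus\{s\}$ with $s\notin D_R(w)$. So the first step is to record that this is exactly the statement of Proposition \ref{P:affine_onesided} with ``avoids $3412$ and $4231$'' replaced by ``rationally smooth,'' hence the proof of Corollary \ref{C:affine_onesided} goes through verbatim: given $w\in W^J$ with $X^J(w)$ rationally smooth and $S(w)\setminus J\neq\emptyset$, multiplying by the longest element $u_0$ of $W_{J\cap S(w)}$ gives a BP decomposition $w' = wu_0$ relative to $J$, and by Theorem \ref{T:bp1} $X(w')$ is a fibre bundle over $X^J(w)$ with fibre the (smooth) full flag variety $X(u_0)$ of finite type $A$, hence $X(w')$ is rationally smooth; then the rationally-smooth analogue of Proposition \ref{P:affine_onesided} produces a Grassmannian BP decomposition of $w'$ with respect to some $K = S(w)\setminus\{s\}$ with $s\notin J$ (using that $D_R(w')\supseteq S(u_0) = J\cap S(w)$), which restricts to a Grassmannian BP decomposition $w = vu$ relative to $J$.

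**Next I would** iterate, exactly as in the proof of Corollary \ref{C:fibrebundle}, to produce a complete BP decomposition $w = v_1\cdots v_m$ with $m = |S(w)\setminus J|$, where each $u_i := v_i\cdots v_m$ satisfies $u_i = v_i u_{i+1}$, a Grassmannian BP decomposition with respect to $K_i := S(u_{i+1})\cup J$, and where each $S(v_i)$ is a strict subset of $S$. By Theorem \ref{T:bp1}, each $X^{K_i}(v_i)$ is rationally smooth, and since $S(v_i)\subsetneq S$, the group $W_{S(v_i)}$ is of finite type $A$ and $X^{K_i}(v_i)$ is a rationally smooth Grassmannian Schubert variety of type $A$. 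Setting $X_i := X^{K_i}(w_i)$ with $w_i := v_1\cdots v_i$, and invoking \cite[Lemma 4.3]{RS14} to see that $w_i = w_{i-1}v_i$ is a BP decomposition with respect to $K_{i-1}$, Theorem \ref{T:bp1} makes each $X_i \arr X_{i-1}$ a Zariski-locally trivial fibre bundle with fibre $X^{K_i}(v_i)$, giving one direction.

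**For the converse**, I would argue as in Corollary \ref{C:fibrebundle}: a Zariski-locally trivial fibre bundle with rationally smooth base and rationally smooth fibre is rationally smooth. This follows from the multiplicativity of Poincar\'e polynomials along the bundle (which is exactly the BP condition, or can be seen from the Leray--Hirsch-type factorization) together with the Carrell--Peterson criterion: if $P^{K_{i-1}}_{w_i}(q) = P^{K_i}_{w_i/\text{base}}(q)\cdot P^{K_i}_{v_i}(q)$ and both factors are palindromic of the appropriate degrees, then so is the product. Inducting up the tower from the point $X_0$, each $X_i$ is rationally smooth, hence so is $X^J(w) = X_m$.

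**The one genuinely new point** — and the place I expect the only real subtlety — is that in the rationally smooth setting the fibres $X^{K_i}(v_i)$ need not be honest Grassmannians; they are merely rationally smooth Grassmannian Schubert varieties of type $A$. The statement as phrased allows fibres of type $A$ or $\tilde A$, but in fact this iteration only ever produces type-$A$ fibres (the $\tilde A$ possibility in the statement would arise if one allowed $S(v_i) = S$, i.e.\ a spiral Schubert variety, which is precisely what happens for the twisted spiral permutations of Theorem \ref{T:BC1}(b) when $S(w) = S$). So a careful version of the proof should split according to whether $S(w) = S$: if $S(w)\subsetneq S$ everything is finite type $A$ and rational smoothness equals smoothness by Carrell--Peterson, so the result reduces to Corollary \ref{C:fibrebundle}; if $S(w) = S$ and $w$ is not already covered, then $w$ is a twisted spiral permutation, whose BP decomposition $w = vu$ relative to $S\setminus\{s_i\}$ exhibits $X(w)$ as a fibre bundle over the spiral Schubert variety $X^{S\setminus\{s_i\}}(v)$ (a rationally smooth Grassmannian Schubert variety of type $\tilde A$, by Mitchell) with fibre the full flag variety of type $A_{n-1}$, and then one continues the iteration on the base, which now has smaller support. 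Handling this base case cleanly — verifying that the spiral Schubert variety really is the ``$\tilde A$'' fibre that makes the statement tight, and that no further $\tilde A$ fibres occur below it — is the main thing to get right; everything else is a direct transcription of the arguments already given for the smooth case.
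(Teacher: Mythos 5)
Your overall plan is exactly the paper's: record the rationally-smooth analogue of Proposition \ref{P:affine_onesided} (established in the text immediately before the Corollary), then repeat the proofs of Corollaries \ref{C:affine_onesided} and \ref{C:fibrebundle} verbatim. That part, and the converse via multiplicativity of Poincar\'e polynomials plus Carrell--Peterson, is correct.

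The one place you go wrong is in the last paragraph, on the twisted spiral case. Having written $X(w)$ as a fibre bundle over the spiral Schubert variety $X^{S\setminus\{s_i\}}(v)$ with fibre the full flag variety $X(u)$, you say ``then one continues the iteration on the base, which now has smaller support.'' Both halves of that clause are wrong. The base $X^{S\setminus\{s_i\}}(v)$ has $S(v) = \tilde S_n$, i.e.\ \emph{full} support; and it is \emph{not} further decomposable as an iterated fibre bundle of Grassmannians (if it were, it would be smooth, contradicting Theorem \ref{T:BC1}(b)). In the paper's framework the spiral Schubert variety is $X_1$, the bottom rung of the tower, appearing as the fibre of $X_1 \arr X_0 = \mathrm{pt}$, and the remaining levels $X_2, \ldots, X_m$ come from the complete maximal BP decomposition of the \emph{fibre} $u$, whose support $S\setminus\{s_i\}$ is indeed strictly smaller. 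So the iteration continues on $u$, not on the spiral base; as written, your instruction would lead one to try to decompose the spiral, which fails. This is a localized confusion — the complete-BP-decomposition framework handles it automatically if you simply continue on the suffix $u_{j+1}$ at each step, which is what ``repeating Corollary \ref{C:fibrebundle}'' means — but it should be corrected before the argument is transcribed.
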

It is implicit in the proof of Corollary \ref{C:fibrebundle2} that if $X^J(w)$
is rationally smooth, then we can construct such a sequence where all the
fibres are either Grassmannians of finite type $A$, or spiral Schubert
varieties.  In fact, these are the only rationally smooth Grassmannian Schubert
varieties, by a theorem of Billey and Mitchell \cite{BM10}.

In the finite-type analogue of Corollary \ref{C:fibrebundle2}, every rationally
smooth Grassmannian Schubert variety is almost-maximal \cite{RS14}. Although we
don't need that fact here, it is interesting to note that the spiral permutations
are also almost-maximal.

\section{Staircase diagrams for affine type $\tilde{A}$}\label{S:staircase}

The main fact we had to establish in the previous section was that every 3412-
and 4231-avoiding element of type $\tilde{A}$ has a complete maximal BP
decomposition. To prove this fact, we showed that the existence of BP
decompositions for $w$ or $w^{-1}$ implies the existence of BP decompositions
for $w$. The same proof strategy was used in \cite{RS14}. Both results are
instances of a more general result, which we formulate as follows:
\begin{prop}\label{P:universal}
    Let $\mcF$ be a family of Coxeter groups which is closed under parabolic
    subgroups (i.e. if $W \in \mcF$, then $W_J \in \mcF$ for every $J \subseteq
    S$). Let $\mcC$ be a class of elements of the groups of $\mcF$ such that
    if $w \in \mcC$, then
    \begin{enumerate}[(1)]
        \item $w^{-1} \in \mcC$, and
        \item $u \in \mcC$ for all parabolic decompositions $w = vu$.
        \item either $w$ or $w^{-1}$ has a maximal Grassmannian BP decomposition,
            i.e. a Grassmannian BP decomposition $vu$ where $v$ is maximal in
            $W_{S(v)}^{S(u) \cap S(v)}$.
    \end{enumerate}
    Then every $w \in \mcC$ has a complete maximal BP decomposition.
\end{prop}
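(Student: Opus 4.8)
The plan is to induct on the quantity $m := |S(w) \setminus J|$, where we think of $w$ as an element of $W^J$ for some Coxeter group $W \in \mcF$. When $m = 0$ the element $w$ lies in $W_J$, so $w = e$ is its own (empty) complete maximal BP decomposition and there is nothing to prove. When $m \geq 1$, the strategy is to first produce a single Grassmannian BP decomposition $w = v_1 u_2$ with $v_1$ maximal in $W_{S(v_1)}^{S(u_2) \cap S(v_1)}$ and $u_2 \in W^J$ with $|S(u_2) \setminus J| < m$, and then apply the inductive hypothesis to $u_2$ to split it as a complete maximal BP decomposition $u_2 = v_2 \cdots v_m$; concatenating gives $w = v_1 v_2 \cdots v_m$, and one checks this is a complete maximal BP decomposition in the sense of Definition \ref{D:complete}. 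The compatibility of the two stages — that the $K_i$ built from $u_2$'s decomposition, together with the single extra step removing $S(v_1)$, form the required chain — is a routine bookkeeping check using that BP decompositions compose (as in \cite[Lemma 4.3]{RS14}).

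The heart of the matter is therefore the single-step claim: \emph{any $w \in \mcC \cap W^J$ with $S(w) \setminus J \neq \emptyset$ admits a Grassmannian BP decomposition $w = v_1 u_2$, relative to $J$, with respect to some $K$ with $J \subseteq K \subsetneq S(w)$, in which $v_1$ is maximal in $W_{S(v_1)}^{S(u_2)\cap S(v_1)}$ and $u_2 \in \mcC$.} This is exactly the content of Corollary \ref{C:affine_onesided} and Proposition \ref{P:affine_onesided}, and the argument there never used anything specific to type $\tilde{A}_n$ beyond hypotheses (1)–(3): given $w$, first reduce to $J = \emptyset$ by replacing $w$ with $w' = w u_0$ where $u_0$ is the longest element of $W_{J \cap S(w)}$ (this $w'$ lies in $\mcC$ because $\mcC$ is closed under the relevant operations — here one uses that $\mcC$ should be closed under this particular multiplication, which in the $\tilde A$ case came from Theorem \ref{T:BC1}; in the abstract setting one takes this as following from (1) and (2) applied appropriately, or one simply works directly with $w$ using (3)). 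By hypothesis (3), either $w$ or $w^{-1}$ has a maximal Grassmannian BP decomposition. If $w$ does, we are done. If instead $w^{-1} = u v$ is one — equivalently $w = u^{-1}v^{-1}$, giving $w = \tilde u \tilde v$ with $\tilde u \in W_K$, $\tilde v \in {}^K W$, and $S(\tilde v) \cap K \subseteq D_R(\tilde u)$ — we run the transfer argument: since $\mcC$ is closed under parabolic subgroups and under taking the $u$-factor, $\tilde u \in \mcC$, so by induction on rank $\tilde u$ itself has a complete maximal BP decomposition; peeling off its first factor $v'$ (maximal, with respect to $K' = S \setminus \{s'\}$, $s' \notin S(\tilde v)$) and using that $v' u'$ is BP while $u' \tilde v$ is reduced yields $S(v') \cap K' \subseteq D_L(u') \subseteq D_L(u'\tilde v)$, so $w = v'(u'\tilde v)$ is the desired Grassmannian BP decomposition with maximal first factor.

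The main obstacle is organizing the double induction cleanly: the single-step claim is itself proved by induction on the rank of $W$ (to get a complete maximal BP decomposition of the parabolic factor $\tilde u$, which lives in a Coxeter group of the same family $\mcF$), while the overall proposition inducts on $m = |S(w) \setminus J|$. One must check there is no circularity — the rank induction only ever invokes the proposition for groups that are \emph{proper} parabolic subgroups, hence of strictly smaller rank, so a simultaneous induction on (rank, then $m$) goes through. A secondary subtlety, as in the $\tilde A$ proof, is verifying that each extracted $v_i$ is genuinely maximal in $W_{K_{i-1}\cap S(v_i)}^{S(v_i)}$: this is automatic from hypothesis (3) at the top step and is preserved down the chain because the inductive complete maximal BP decomposition of $u_2$ supplies maximality for $v_2, \dots, v_m$ by definition, and the compatibility of the $K_i$'s (established via \cite[Lemma 4.3]{RS14}) ensures these maximality statements are relative to the correct parabolic quotients.
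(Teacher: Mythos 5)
Your proposal reconstructs the \emph{first} proof strategy from Section \ref{S:affine_typeA} (the direct ``transfer argument'' used for Proposition \ref{P:affine_onesided}) rather than the paper's actual proof of Proposition \ref{P:universal}, which is the short staircase-diagram argument labelled ``Second proof of Proposition \ref{P:affine_onesided}''. The paper's proof inducts on $|S(w)|$: in the case where $w^{-1}$ has the maximal Grassmannian BP decomposition, it concludes by induction that $w^{-1}$ itself has a complete maximal BP decomposition, hence corresponds to a staircase diagram $\mcD$ under Theorem \ref{T:stairs}; then $\flip(\mcD)$ is again a staircase diagram and corresponds to $w$, so $w$ has a complete maximal BP decomposition. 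No transfer of an individual Grassmannian factor from $w^{-1}$ to $w$ is ever performed.

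This is not merely a stylistic difference: your transfer argument has a genuine gap in the abstract setting. When you write $w = \tilde{u}\,\tilde{v}$ with $\tilde{u}\in W_K$, $\tilde{v}\in {}^KW$, $S(\tilde{v})\cap K\subseteq D_R(\tilde{u})$, and then ``peel off'' the first factor $v'$ of a complete maximal BP decomposition of $\tilde{u}$, you assert $s'\notin S(\tilde{v})$ for the removed generator $s' = S(\tilde{u})\setminus K'$. This is exactly the step that the paper's first proof justifies via \cite[Theorem 6.1]{RS14} (the type-$A$ statement that a non-maximal $3412/4231$-avoiding element has a Grassmannian BP decomposition with $s'\notin D_R$). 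Hypotheses (1)--(3) of Proposition \ref{P:universal} give you only that \emph{some} maximal Grassmannian BP decomposition of $\tilde{u}$ (or $\tilde{u}^{-1}$) exists; they say nothing about which generator is removed, and in particular give no control relative to $D_R(\tilde{u})$ or $S(\tilde{v})$. Without that control, the product $u'\tilde{v}$ need not be reduced, $w = v'(u'\tilde{v})$ need not be the parabolic decomposition with respect to $K'$, and the inclusion $S(v')\cap K'\subseteq D_L(u'\tilde{v})$ is unavailable. This is precisely the obstruction the paper's staircase-diagram argument is designed to sidestep: the bijection of Theorem \ref{T:stairs} together with the $\flip$ symmetry makes the class ``has a complete maximal BP decomposition'' manifestly closed under $w\mapsto w^{-1}$, which is all that is needed.

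(A smaller wrinkle: $\tilde{u}$ is the $v$-factor of the parabolic decomposition $w = \tilde{u}\tilde{v}$, so $\tilde{u}\in\mcC$ does not follow directly from hypothesis (2) applied to $w$; one must first invert, apply (2) to $w^{-1} = \tilde{v}^{-1}\tilde{u}^{-1}$ to get $\tilde{u}^{-1}\in\mcC$, and then invert again. You gesture at this but the phrasing ``closed under taking the $u$-factor, so $\tilde{u}\in\mcC$'' elides the double use of (1).)
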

The point of this section is to show that Proposition \ref{P:universal} has a
short proof using staircase diagrams. Taking $\mcF$ to be the Weyl groups of
finite type $A$ and affine type $\tilde{A}$, and $\mcC$ to be the class of
permutations avoiding the pattern $3412$ and $4231$, we get a proof of Theorem \ref{T:main1}
by a somewhat different route. Our proof of Proposition \ref{P:universal} will
still hold if we replace ``maximal'' by ``maximal or almost-maximal'' BP
decompositions, and hence Proposition \ref{P:universal} also gives an alternate
path to the results on existence of BP decompositions in \cite{RS14}. For
simplicity, we restrict ourselves to maximal BP decompositions.

%
%

As we will also use staircase diagrams in the next section, we briefly review
the definition from \cite{RS15}. Let $G$ be a graph with vertex set $S$, and
recall that a subset $B \subseteq S$ is connected if the subgraph of $G$
induced by $B$ is connected. If $\mcD$ is a collection of subsets of $S$ and
$s\in S$, we set
\begin{equation*}
    \mcD_s:=\{B\in\mcD \ |\ s\in B\}.
\end{equation*}
Staircase diagrams are then defined as follows:
\begin{definition}[\cite{RS15}]\label{D:staircase}
    Let $\Gamma$ be a graph with vertex set $S$.  Let $\mcD = (\mcD, \preceq)$ be a partially ordered subset of $2^S$ not
    containing the empty set. We say that $\mcD$ is a \emph{staircase diagram}
    if the following are true:
    \begin{enumerate}[(1)]
        \item Every $B\in\mcD$ is connected, and if $B$ covers $B'$ then $B\cup B'$ is connected.
        \item The subset $\mcD_s$ is a chain for every $s \in S$.
        \item If $s\adj t$, then $\mcD_s\cup \mcD_t$ is a chain, and $\mcD_s$ and $\mcD_t$
            are saturated subchains of $\mcD_s \cup \mcD_t$.
        \item If $B\in\mcD$, then there is some $s\in S$ (resp. $s'\in S$) such
            that $B$ is the minimum element of $\mcD_s$ (resp. maximum element of
            $\mcD_{s'}$).
    \end{enumerate}
\end{definition}
The definition is symmetric with respect to the partial order, so if $\mcD =
(\mcD,\preceq)$ is a staircase diagram, and $\preceq'$ is the reverse order to
$\preceq$, then $(\mcD, \preceq')$ is also a staircase diagram, called
$\flip(\mcD)$.  Finally, if $\mcD$ is a staircase diagram over the Coxeter graph of a Coxeter group $W$, we say $\mcD$ is \emph{spherical} if $W_B$ is a finite group for all $B\in\mcD$.
The main result about staircase diagrams is:
\begin{thm}[\cite{RS15}, Theorem 5.1, Theorem 3.7, and Corollary 6.4]\label{T:stairs}
    Let $\Gamma$ be the Coxeter graph of a Weyl group $W$. Then there is a bijection
    between spherical staircase diagrams over $\Gamma$, and elements of $w$ with a complete
    maximal BP decomposition (relative to $\emptyset$).

    Furthermore, if a staircase diagram $\mcD$ corresponds to $w \in W$, then
    $\flip(\mcD)$ corresponds to $w^{-1}$.
\end{thm}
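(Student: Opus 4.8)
The plan is to prove both halves by constructing explicit maps in each direction and checking they are mutually inverse; this is how \cite{RS15} proves the three cited results, and I would follow that route. \emph{The forward map.} Given a spherical staircase diagram $\mcD$, fix a linear extension $B_1,\ldots,B_N$ of the reverse order $\preceq^{\,\mathrm{op}}$ (blocks larger in $\preceq$ come first), set $A_i:=B_{i+1}\cup\cdots\cup B_N$, and let $v_i$ be the longest element of the quotient $W_{B_i}^{\,B_i\cap A_i}$; this exists precisely because $W_{B_i}$ is finite, by sphericity. Put $w(\mcD):=v_1v_2\cdots v_N$. The first task is independence of the chosen linear extension: any two linear extensions of $\preceq^{\,\mathrm{op}}$ are joined by transpositions of consecutive incomparable blocks, and if $B_i,B_{i+1}$ are incomparable then axioms (2) and (3) force $B_i\cap B_{i+1}=\emptyset$ and force $B_i,B_{i+1}$ to span no edge of $\Gamma$; hence $W_{B_i}$ and $W_{B_{i+1}}$ commute, the sets $B_i\cap A_i$ and $B_{i+1}\cap A_{i+1}$ are unchanged under the swap, and $v_iv_{i+1}=v_{i+1}v_i$. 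So $w(\mcD)$ is well defined.

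\emph{$w(\mcD)$ has a complete maximal BP decomposition.} Each $v_i=w_0(W_{B_i})\,w_0(W_{B_i\cap A_i})$ is the top element of a parabolic quotient of the finite group $W_{B_i}$, and such an element has a complete maximal BP decomposition inside $W_{B_i}$ relative to $B_i\cap A_i$ (the one-block case, which can be built by peeling off one generator of $B_i\setminus A_i$ at a time). Concatenating these local decompositions in the order $v_1,\ldots,v_N$ produces a factorization of $w(\mcD)$ with $|S(w(\mcD))|$ factors, and I would check it is a complete maximal BP decomposition by verifying the condition $S(v)\cap K\subseteq D_L(u)$ at each step. The only steps not internal to some $W_{B_i}$ are the junctions between $v_i$ and $v_{i+1}$; there the condition follows from axiom (1) (if a block is covered by, or overlaps, a block above it, their union is connected) together with the saturation clause of axiom (3), which together force every generator of $B_{i+1}$ lying in $A_{i+1}$ to be a left descent of $v_{i+1}\cdots v_N$. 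Maximality of the whole decomposition is inherited from maximality of each $v_i$.

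\emph{Bijectivity.} For injectivity I would recover $\mcD$ from $w=w(\mcD)$: its complete maximal BP decomposition breaks uniquely into maximal runs of consecutive Grassmannian factors, the combined support of a run is a block, and $\preceq$ is the transitive closure of ``a run occurring later that shares a vertex.'' For surjectivity, start from an arbitrary $w$ with a complete maximal BP decomposition, group its factors into maximal runs, take the combined supports as blocks with the induced order, and verify the four staircase axioms and sphericity: axiom (1) because supports of BP factors are connected and a merged run has connected union; axioms (2)--(3) because blocks sharing a vertex are ordered and the descent-set structure of BP decompositions makes $\mcD_s,\mcD_t$ saturated for $s\adj t$ (cf.\ \cite[Lemma 6.4]{RS14}); axiom (4) because the generator introduced at the start of a run, and the generator whose removal ends it, witness the block as a minimum, resp.\ maximum, of the relevant $\mcD_s$; sphericity because a top coset element exists only when $W_{B_i}$ is finite. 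One then checks these two constructions are inverse.

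\emph{The flip statement.} If $B_1,\ldots,B_N$ is a linear extension used for $\mcD$, then $B_N,\ldots,B_1$ is one for $\flip(\mcD)$, so $w(\flip(\mcD))=v_N'v_{N-1}'\cdots v_1'$ with $v_i'$ the top element of $W_{B_i}^{\,B_i\cap A_i'}$ and $A_i'=B_1\cup\cdots\cup B_{i-1}$. Using $\bigl(w_0(W_B)w_0(W_C)\bigr)^{-1}=w_0(W_B)\,w_0(W_{\iota_B(C)})$, where $\iota_B$ is the longest-element involution $c\mapsto w_0(W_B)\,c\,w_0(W_B)$ of $B$ (a Dynkin diagram automorphism), the identity $w(\flip(\mcD))=w(\mcD)^{-1}$ reduces factor by factor to $\iota_{B_i}(B_i\cap A_i)=B_i\cap A_i'$, i.e., to the claim that $\iota_{B_i}$ carries $\{s:B_i=\min\mcD_s\}$ onto $\{s:B_i=\max\mcD_s\}$. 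I expect this last identity to be the main obstacle: it is invisible at the purely graph-theoretic level and must be extracted from sphericity together with axiom (3), which controls how the two extreme vertex sets of a block sit against the neighbouring blocks; this is essentially the content of \cite[Corollary 6.4]{RS15}.
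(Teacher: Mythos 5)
The paper does not prove Theorem~\ref{T:stairs}; it is quoted verbatim from \cite{RS15}, so there is no in-paper proof to compare against. Your reconstruction of the bijection itself (assigning to a block $B_i$ the longest element of $W_{B_i}^{B_i\cap A_i}$, multiplying from maximal blocks downward, checking independence of the linear extension via incomparable-implies-disjoint-and-nonadjacent, and recovering $\mcD$ from the runs of a complete maximal BP decomposition) is in the spirit of \cite{RS15} and is essentially sound.

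However, the flip argument contains a genuine error. You claim that $w(\flip(\mcD))=w(\mcD)^{-1}$ reduces factor by factor to the identity
$\iota_{B_i}(B_i\cap A_i)=B_i\cap A_i'$, and you flag this as the ``main obstacle'' you expect to be extractable from the axioms. In fact this identity is \emph{false}, and no amount of work with sphericity or axiom~(3) will establish it. Take $W$ of type $A_5$ and the two-block staircase diagram $\mcD$ with $B_1=\{s_3,s_4,s_5\}\succ B_2=\{s_1,s_2,s_3\}$ (one checks all four axioms). For the maximal block $B_1$ we have $B_1\cap A_1=\{s_3\}$ and $B_1\cap A_1'=\emptyset$, while $\iota_{B_1}$ is the order-reversing automorphism of $\{s_3,s_4,s_5\}$, so $\iota_{B_1}(\{s_3\})=\{s_5\}\neq\emptyset$. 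Concretely, in this example $v_1=w_0(W_{B_1})\,w_0(W_{\{s_3\}})$, $v_2=w_0(W_{B_2})$, $v_1'=w_0(W_{B_1})$, $v_2'=w_0(W_{B_2})\,w_0(W_{\{s_3\}})$, and one sees that $v_2'v_1'=v_2^{-1}v_1^{-1}$ holds as a product --- because the factor $w_0(W_{\{s_3\}})$ migrates across the boundary between the two Grassmannian factors --- yet $v_i'\neq v_i^{-1}$ for either $i$. So the equality of products cannot be proved by matching factors. The actual proof in \cite[Corollary~6.4]{RS15} does not proceed factor by factor; it rests on a telescoping or re-grouping of the $w_0$'s at the overlaps (together with the uniqueness of complete maximal BP decompositions), and your draft is missing that re-grouping step.
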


\begin{proof}[Second proof of Proposition \ref{P:affine_onesided}]
    The proof is by induction on $|S(w)|$. Suppose that $w$ has a maximal
    Grassmannian BP decomposition $w = vu$. Since $|S(u)| < |S(v)|$, we can
    conclude by induction that $u$ has a complete maximal BP decomposition.
    But these means that $w$ has a complete maximal BP decomposition, and
    we are done. Similarly, if $w^{-1}$ has a maximal Grassmannian BP
    decomposition, then we conclude that $w^{-1}$ has a complete maximal
    BP decomposition. But this means that $w^{-1}$ comes from a staircase
    diagram $\mcD$, so $w$ corresponds to $\flip(\mcD)$. In particular, $w$
    must also have a complete maximal BP decomposition.
\end{proof}

\section{Enumeration of smooth Schubert varieties}\label{S:enumeration}

Let $\tilde\Gamma_n$ be the Coxeter graph of type $\tilde A_n$ with vertices
$\tilde S_n=\{s_0,s_1,\ldots,s_{n-1}\}$ as in Figure \ref{F:dynkin1}.  In this
case, proper, connected subsets of $\tilde S_n$ are simply intervals on the
cycle graph $\tilde\Gamma_n$.  Define the interval


\begin{equation*}
\arint{i}{j}:=\begin{cases}
                \{s_i,\ldots,s_j\} & \text{if $i\leq j$}\\
                \{s_i,\ldots,s_{n-1},s_0,\ldots,s_j\} & \text{if $i>j$}\\
                \end{cases}
\end{equation*}

We represent a staircase diagram $\mcD$ pictorially with a collection of ``blocks" where if $B_2$ lies above $B_1$ and $B_1\cup B_2$ is connected, then $B_1\prec B_2$ in $\mcD$.
For example, the staircase diagram
$$\mcD=\{\arint{0}{3}\prec\arint{7}{1}\prec\arint{5}{7}\prec\arint{3}{6}\}$$
over $\tilde\Gamma_{10}$ could be represented in Figure \ref{F:affine_circ_diagram}.

    \def\wedgeoffset{45}
    \def\wedgeinnerradius{3}
    \def\wedgeouterradius{4}
\begin{figure}[h]
    \begin{tikzpicture}[x={(0.866cm,0.5cm)},y={(0cm,1cm)},z={(0.866cm,-0.5cm)},scale=0.5]
        \upperbox[252,288,\wedgeinnerradius,\wedgeouterradius,0,1,green,3]
        \upperbox[288,324,\wedgeinnerradius,\wedgeouterradius,0,1,green,2]
        \slantupperbox[324,360,\wedgeinnerradius,\wedgeouterradius,0,1,-0.5,green,1]
        \slantupperbox[324,360,\wedgeinnerradius,\wedgeouterradius,1,2,-0.5,yellow,1]
        \upperbox[252,288,\wedgeinnerradius,\wedgeouterradius,1,2,cyan,3]
        \upperbox[216,252,\wedgeinnerradius,\wedgeouterradius,1,2,cyan,4]
        \upperbox[180,216,\wedgeinnerradius,\wedgeouterradius,0,1,pink,5]
        \upperbox[180,216,\wedgeinnerradius,\wedgeouterradius,1,2,cyan,5]
        \lowerbox[144,180,\wedgeinnerradius,\wedgeouterradius,0,1,pink,6]
        \lowerbox[144,180,\wedgeinnerradius,\wedgeouterradius,1,2,cyan,6]
        \lowerbox[108,144,\wedgeinnerradius,\wedgeouterradius,-1,0,yellow,7]
        \lowerbox[108,144,\wedgeinnerradius,\wedgeouterradius,0,1,pink,7]
        \slantlowerbox[0,36,\wedgeinnerradius,\wedgeouterradius,-0.5,0.5,-0.5,green,0]
        \slantlowerbox[0,36,\wedgeinnerradius,\wedgeouterradius,0.5,1.5,-0.5,yellow,0]
        \slantlowerbox[36,72,\wedgeinnerradius,\wedgeouterradius,0,1,-0.5,yellow,9]
        \slantlowerbox[72,108,\wedgeinnerradius,\wedgeouterradius,-0.5,0.5,-.5,yellow,8]
    \end{tikzpicture}
    \caption{Staircase diagram on $\tilde\Gamma_{10}$}
    \label{F:affine_circ_diagram}
\end{figure}
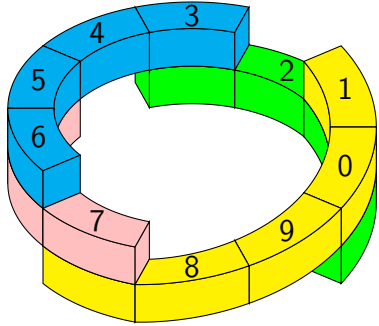

However, for the sake of convenience we will represent the cycle graph
$\tilde\Gamma_n$ as a line graph with vertex $s_0$ each end point, as follows.

$$
\begin{tikzpicture}
\def \n {6}
\def \radius {2.5cm}
\def \margin {0.4} 
\foreach \s in {1,...,\n}
{ 
  \draw ({2*\s-2+\margin} ,0) -- ({2*\s-\margin},0);
}
  \node[draw, circle, minimum size=24pt] at (0,0) {$s_{0}$};
  \node[draw, circle, minimum size=24pt] at (2,0) {$s_{1}$};
  \node[draw, circle, minimum size=24pt] at (4,0) {$s_{2}$};
  \node[draw, circle, minimum size=24pt] at (8,0) { };
  \node at (8,0) {$s_{n-2}$};
  \node[draw, circle, minimum size=24pt] at (10,0) {};
  \node at (10,0) {$s_{n-1}$};
  \node[draw, circle, minimum size=24pt] at (12,0) {$s_{0}$};
  \draw[-,thick, loosely dotted] (6-\margin,0)--(6+\margin,0);
\end{tikzpicture}
$$


We can then draw the staircase diagram in two-dimensions; for instance, the
staircase diagram in Figure \ref{F:affine_circ_diagram} is represented as:

$$
\begin{tikzpicture}[scale=0.5]
    \ppAff{11}{
        {2,0,0,0,0,0,0,2,2,2,2},
        {3,3,3,3,0,0,0,0,0,3,3},
        {0,0,0,4,4,4,0,0,0,0,0},
        {0,0,0,0,1,1,1,1,0,0,0}}
\end{tikzpicture}
$$

Note that if $s_0\in B\in\mcD$, then $B$ appears as a ``disconnected" block in
the pictorial representation of $\mcD$.

By Corollary \ref{C:fibrebundle} and Theorem \ref{T:stairs}, to enumerate
smooth Schubert varieties of type $\tilde A_n$, it suffices to enumerate
spherical staircase diagrams over the graph $\tilde\Gamma_n$.  Since every
proper subgraph of $\tilde\Gamma_n$ is a Dynkin diagram of finite type, the
only non-spherical staircase diagram is $\mcD=\{\tilde S_n\}$.

We first consider staircase diagrams over the Dynkin graph of finite type
$A_n$.  Let $\Gamma_n$ denote the line graph with vertex set
$S_n=\{s_1,\ldots,s_{n}\}$.  In this case, we denote interval blocks on line
graph $\Gamma_n$ as simply $[s_i,s_j]$ for $i<j$.

$$
\begin{tikzpicture}
\def \n {5}
\def \radius {2.5cm}
\def \margin {0.4} 
\foreach \s in {2,...,\n}
{ 
  \draw ({2*\s-2+\margin} ,0) -- ({2*\s-\margin},0);
}
  \node[draw, circle, minimum size=24pt] at (2,0) {$s_{1}$};
  \node[draw, circle, minimum size=24pt] at (4,0) {$s_{2}$};
  \node[draw, circle, minimum size=24pt] at (8,0) { };
  \node at (8,0) {$s_{n-1}$};
  \node[draw, circle, minimum size=24pt] at (10,0) {$s_{n}$};
  \draw[-,thick, loosely dotted] (6-\margin,0)--(6+\margin,0);
\end{tikzpicture}
$$

A staircase diagram $\mcD$ is said to have \emph{full support} if every vertex
of the underlying graph appears in some $B \in \mcD$.  We say a staircase
diagram $\mcD$ over $\Gamma_n$ of full support is \emph{increasing} if we can
write $$\mcD=\{B_1\prec B_2\prec \cdots\prec B_m\}$$ with $s_1\in B_1$.
Similarly, a staircase diagram is \emph{decreasing} if if we can write
$\mcD=\{B_1\succ B_2\succ \cdots\succ B_m\}$ with $s_1\in B_1$.  Let
$M^{\pm}(n)$ denote the set of fully supported increasing/decreasing staircase
diagrams over $\Gamma_n$.  Define the generating series
\begin{equation}
    A_M(t):=\sum_{n=1}^{\infty} m_n\ t^n
\end{equation}
where $m_n:=|M^+(n)|=|M^-(n)|$.
\begin{proposition}\label{P:increasing}
    The generating function $\displaystyle A_M(t)=\frac{1-2t-\sqrt{1-4t}}{2t}$.
\end{proposition}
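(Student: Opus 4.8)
The plan is to set up a bijection between fully supported increasing staircase diagrams over $\Gamma_n$ and a combinatorial class whose generating function is classically known to be $\frac{1-2t-\sqrt{1-4t}}{2t}$ --- namely Dyck paths (equivalently, the generating function $C(t)-1$ where $C$ is the Catalan generating function, so that $m_n = C_{n}$... one should double-check the offset, but the stated closed form is exactly $\sum_{n\ge 1} C_n t^n$ where $C_n$ is the $n$-th Catalan number). So the real content is: \emph{$|M^+(n)|$ is the $n$-th Catalan number}, and Slivken's suggested bijection (credited in the acknowledgements) goes through Dyck paths.

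First I would unwind what an increasing, fully supported staircase diagram over the line graph $\Gamma_n$ actually looks like. By Definition \ref{D:staircase}, each block $B_i$ is an interval $[s_{a_i}, s_{b_i}]$; condition (2) forces the $\mcD_s$ to be chains, condition (3) forces consecutive blocks in the chain $B_1 \prec \cdots \prec B_m$ to overlap in exactly the right way, and condition (4) (the ``endpoint'' condition) pins down which blocks can be minimal/maximal at a given vertex. Writing the chain with $s_1 \in B_1$, fully supported means $\bigcup_i [s_{a_i},s_{b_i}] = \{s_1,\dots,s_n\}$, and the increasing condition together with the chain/saturation conditions forces the left endpoints $a_1 = 1 \le a_2 \le \cdots$ and the right endpoints $b_1 \le b_2 \le \cdots \le b_m = n$ to interleave: consecutive intervals share at least one vertex (so $a_{i+1} \le b_i$), and the overlap $[a_{i+1}, b_i]$ is a single vertex precisely when... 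I would extract from conditions (3) and (4) the exact shape of the overlaps. The upshot should be that such a diagram is equivalent to a lattice path / sequence of up-and-down steps of length roughly $2n$, i.e. a Dyck-path-type object.

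The cleanest route is probably to encode the diagram by reading the vertices $s_1, \dots, s_n$ left to right and recording, for each vertex $s_j$, the set $\mcD_{s_j}$ (a sub-chain of $B_1 \prec \cdots \prec B_m$, by condition (2)); as $j$ increases by one, $\mcD_{s_j}$ changes by conditions (3)--(4) in a controlled way --- it is a ``sliding window'' in the chain that can only grow on the top, shrink on the bottom, and must be nonempty (full support) and saturated. Tracking the top index of this window as $j$ goes from $1$ to $n$ gives a weakly increasing sequence; tracking the bottom index gives another; and the constraint that the window is always nonempty is exactly a ballot/Dyck condition. Translating ``block $B_i$ is the minimum of some $\mcD_s$ and the maximum of some $\mcD_{s'}$'' (condition (4)) into this language is what guarantees we hit \emph{every} index of the chain and don't double-count, which should make the map a bijection onto Dyck paths of semilength $n$ (or $n-1$, whichever matches $C_n$ in the stated series). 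Then $A_M(t) = \sum_{n\ge1} C_n t^n = \frac{1-2t-\sqrt{1-4t}}{2t}$ by the standard Catalan generating-function identity $tC(t)^2 - C(t) + 1 = 0$ with $C(t) = \sum_{n\ge 0} C_n t^n$.

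The main obstacle is Step 2: carefully verifying that the staircase-diagram axioms (especially the saturation condition (3) and the min/max endpoint condition (4)) translate \emph{exactly} into the Dyck/ballot condition, with no extra or missing constraints, and checking the small cases ($n=1,2,3$ giving $1, 2, 5$, matching $C_1, C_2, C_3$) to be sure the index offset in the generating function is right. Once the bijection is nailed down the generating-function computation is a one-line invocation of the Catalan series. I would also double check the claim $m_n = |M^+(n)| = |M^-(n)|$ --- the equality of the two cardinalities should follow from an obvious reflection symmetry of $\Gamma_n$ (reversing $s_1 \leftrightarrow s_n$) combined with $\flip$, but it is worth stating explicitly.
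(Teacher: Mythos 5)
Your proposal is correct and matches the paper's own proof: the paper also credits Slivken's bijection to Dyck paths of semilength $n$ (so $m_n = C_n$), giving $A_M(t) = C(t) - 1 = \frac{1-2t-\sqrt{1-4t}}{2t}$. Your vertex-by-vertex ``sliding window'' encoding is just a transposed reading of the paper's per-block encoding, which records for each $B_i$ the step pair $\bigl(r(B_i),u(B_i)\bigr) = \bigl(|B_i\setminus B_{i-1}|,\,|B_i\setminus B_{i+1}|\bigr)$ and concatenates the resulting right/up segments into a lattice path from $(0,0)$ to $(n,n)$; both yield the same Dyck path and both leave the verification of the ballot condition from Definition~\ref{D:staircase} at roughly the same level of detail.
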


\begin{proof}
    The proposition can be proved by modifying the proof of \cite[Proposition
    8.3]{RS15}. In this paper we present an alternate proof by giving a
    bijection between fully supported increasing staircase diagrams and Dyck
    paths.  Indeed, let $\mcD=\{B_1\prec B_2\prec \cdots\prec B_m\}$ be a fully
    supported increasing staircase diagram on $\Gamma_n$.  For each
    $B_i\in\mcD,$ define the numbers
    \begin{equation*}
        r(B_i):=\#\{s\in B_i\setminus B_{i-1}\}\qquad\text{and}\qquad
            u(B_i):=\#\{s\in B_i\setminus B_{i+1}\}
    \end{equation*}
    where we set $B_0=B_{m+1}=\emptyset.$
    Let $P(\mcD)$ denote the lattice path in $\Z^2$ from $(0,0)$ to $(n,n)$ which
    takes $r(B_1)$ steps to the right, then $u(B_1)$ steps going up, followed by
    $r(B_2)$ steps to the right, then $u(B_2)$ steps going up and so forth (See
    Example \ref{Ex:Dyckpath_bijection}).  Since $\mcD$ is fully supported, we have
    that
    \begin{equation*}
        \sum_{i=1}^{m} r(B_i)=\sum_{i=1}^{m} u(B_i)=n
    \end{equation*}
    and hence $P(\mcD)$ terminates at $(n,n)$.  Definition \ref{D:staircase}
    implies $r(B_i),u(B_i)>0$ and that the partial sums
    \begin{equation*}
        \sum_{k=1}^{i} r(B_k)\geq \sum_{k=1}^{i} u(B_k)
    \end{equation*} for all $i\leq m.$  Thus
    $P(\mcD)$ is a Dyck path.  Conversely, any Dyck path is given by a sequence of
    positive pairs $(r_i,u_i)$ giving steps to the right followed up steps going
    up.  Set $u_0:=0$ and define
    \begin{equation*}
        \bar B_i:=\left\{s_j\in S\ |\  \sum_{k=1}^{i-1} u_k<j\leq \sum_{k=1}^{i} r_k\right\}
    \end{equation*}
    and $\bar\mcD:=\{\bar B_1\prec \bar B_2\prec \cdots\prec \bar B_m\}$.  It is
    easy to see that $\bar\mcD$ is a fully supported staircase diagram and that
    this construction is simply the inverse of the map $P$.  The proposition now
    follows from the generating function for Dyck paths which is given by Catalan
    numbers.
\end{proof}

\begin{example}\label{Ex:Dyckpath_bijection}
    Consider the staircase diagram $\mcD=({s_1}\prec[s_2,s_5]\prec[s_4,s_6])$ on $\Gamma_6$.  The sequence of pairs $(r_i,u_i)$ is $((1,1),(4,2),(1,3))$ and corresponding Dyck path $P(\mcD)$ is given below.
\begin{equation*}
    \begin{tikzpicture}[scale=0.5]
        \ppFinite{6}{
            {0,0,0,0,0,1},
            {0,2,2,2,2},
            {3,3,3,0}}
        \draw[<->, shift={(2.5,2)},black,line width=.25mm](0,0)--(2,0);
        \draw[shift={(6,-1)},step=1.0,black] (0,0) grid (6,6);
        \draw[shift={(6,-1)},red, line width=.75mm] (0,0)--(1,0)--(1,1)--(5,1)--(5,3)--(6,3)--(6,6);
    \end{tikzpicture}
\end{equation*}
\end{example}

The idea behind enumerating staircase diagrams over $\tilde\Gamma_n$ is to
partition a staircase diagram into a disjoint union of increasing and
decreasing staircase diagrams of finite type $A$.  To do this precisely, we
introduce the notion of a broken staircase diagram.  We say that a partially
ordered collection of subsets $(\mcB,\prec)$ of vertices of the graph $\Gamma_n$ is a
\emph{broken staircase diagram} if
\begin{equation*}
    \mcB=\{B\cap S_n\ |\ B\in\mcD\}
\end{equation*}
for some $\mcD\in M^+(n+1)\cup M^-(n+1)$ where the partial order on $\mcB$ is
induced from $\mcD$.  Note that broken staircase diagrams are allowed to violate
part (4) of Definition \ref{D:staircase}, and must be either increasing or
decreasing. In particular, if
$\mcB=\{B_1\prec\cdots\prec B_m\}$ is broken, then it may be possible for $B_{m}\subset
B_{m-1}$.  Define the generating series
\begin{equation*}
    A_B(t)=\sum_{n=1}^{\infty} b_n\, t^n
\end{equation*}
where $b_n$ denotes the number of increasing (or equivalently, decreasing)
broken staircase diagrams on $\Gamma_n$.

\begin{proposition}\label{P:broken_increasing}
    The generating function $\displaystyle A_B(t)=\frac{(1-t)A_M(t)}{t}-1.$
\end{proposition}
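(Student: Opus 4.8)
The plan is to reduce the stated identity to the single recursion $b_n = m_{n+1}-m_n$, and then to prove that recursion by studying the map $\rho$ that restricts a fully supported increasing staircase diagram on $\Gamma_{n+1}$ to the sub-line-graph $\Gamma_n$, i.e.\ deletes the endpoint $s_{n+1}$. For the reduction, note that $\Gamma_1$ carries a unique staircase diagram, so $m_1 = 1$ (also visible from $A_M(t) = t + 2t^2 + \cdots$), whence
\[
\frac{(1-t)A_M(t)}{t} - 1 \;=\; \sum_{n\ge 1} m_n t^{\,n-1} - \sum_{n\ge 1} m_n t^{\,n} - 1 \;=\; \sum_{n\ge 1}(m_{n+1}-m_n)\,t^{\,n},
\]
the constant term $m_1-1$ vanishing. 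So it suffices to prove $b_n = m_{n+1}-m_n$ for all $n\ge1$; write $B^+(n)$ for the set of increasing broken staircase diagrams on $\Gamma_n$, so $b_n = |B^+(n)|$.

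For $\mcD = \{B_1 \prec \cdots \prec B_m\} \in M^+(n+1)$ set $\rho(\mcD) := \{B\cap S_n : B\in\mcD,\ B\cap S_n\neq\emptyset\}$ with the induced order; this is an increasing broken diagram on $\Gamma_n$. The structural point driving the argument is that in any such $\mcD$ the vertex $s_{n+1}$ lies \emph{only} in the top block $B_m$: since $s_{n+1}$ is the right endpoint of the line graph $\Gamma_{n+1}$ and, under the Dyck-path bijection of Proposition \ref{P:increasing}, each block of $\mcD$ acquires its own new rightmost vertex, the globally rightmost vertex $s_{n+1}$ can only be the new vertex of $B_m$. Hence $\rho(\mcD)$ agrees with $\mcD$ except that $B_m$ is replaced by $B_m\setminus\{s_{n+1}\}$ (deleted entirely when $B_m=\{s_{n+1}\}$); in particular the top block of every $\mcB\in B^+(n)$ contains $s_n$.

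The heart of the proof is a fibre count for $\rho\colon M^+(n+1)\to B^+(n)$. A preimage of $\mcB = \{B_1\prec\cdots\prec B_p\}$ must reinsert $s_{n+1}$ into the top, so the only candidates are $\mcD_{\mathrm{new}} = \{B_1\prec\cdots\prec B_p\prec\{s_{n+1}\}\}$ and $\mcD_{\mathrm{top}} = \{B_1\prec\cdots\prec B_{p-1}\prec(B_p\cup\{s_{n+1}\})\}$. Since $s_n\in B_p$, one checks that $\mcD_{\mathrm{top}}$ is always a valid element of $M^+(n+1)$ (the enlarged top block is a connected interval whose new rightmost vertex is $s_{n+1}$, so the positivity $r(\cdot)>0$ behind the Dyck-path bijection is preserved, and axiom (4) of Definition \ref{D:staircase} holds for it via $s_{n+1}$), while $\mcD_{\mathrm{new}}$ is a valid element of $M^+(n+1)$ exactly when $\mcB$ itself satisfies axiom (4), which one verifies is equivalent to $B_p\not\subseteq B_{p-1}$, i.e.\ to $\mcB\in M^+(n)$ (viewing $M^+(n)\subseteq B^+(n)$ via $\mcB\mapsto\rho(\mcB\cup\{\{s_{n+1}\}\})$). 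Thus $\rho$ is surjective with fibres of size $2$ over $M^+(n)$ and of size $1$ over $B^+(n)\setminus M^+(n)$, giving $m_{n+1} = 2m_n + (b_n-m_n) = m_n + b_n$, as required.

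The main obstacle is exactly this fibre count: one must check with care that $\mcD_{\mathrm{top}}$ and, where legitimate, $\mcD_{\mathrm{new}}$ satisfy all clauses of Definition \ref{D:staircase}---especially axiom (4) for the altered top block and the per-block positivity of $r(B_i)$, which is what separates genuine from broken diagrams---and that these exhaust the fibre. One also has to confirm that an increasing broken diagram which a priori arises by restriction from $M^-(n+1)$ is in fact realized from $M^+(n+1)$; this is immediate from the construction $\mcB\mapsto\mcD_{\mathrm{top}}$ (alternatively, it follows because $\flip$ interchanges restrictions of $M^+(n+1)$-diagrams with those of $M^-(n+1)$-diagrams). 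Granting all this, the generating-function identity follows from the reduction above.
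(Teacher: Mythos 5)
Your proof is correct and follows essentially the same route as the paper: reduce the generating-function identity to the recursion $b_n = m_{n+1}-m_n$, then establish it via the restriction map $M^+(n+1)\to B^+(n)$ obtained by deleting the endpoint $s_{n+1}$, observing that the fibre has size $2$ over genuine diagrams in $M^+(n)$ and size $1$ over properly broken ones. The paper packages the fibre analysis slightly differently (phrasing it as reconstructing $\mcD$ from $\mcB(\mcD)$ case-by-case according to whether the top block of the broken diagram is contained in the one below), but the count and the conclusion are the same.
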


\begin{proof}
Clearly $b_0=0$ and $b_1=1$, so we assume that $n\geq 2$.  Let
$\mcD=\{B_1\prec\cdots \prec B_m\}\in M^+(n+1)$ and let $\mcB(\mcD)=\{B\cap
S_n\ |\ B\in\mcD\}$ denote the corresponding broken staircase diagram.  If
block index $k\leq m-2$, then $B_k\subseteq S_n$.  Hence $\mcB(\mcD)$
determines $\mcD$ up to the last two blocks $B_{m-1}, \, B_m$.  If $B_m\subset
B_{m-1}$, then $\mcB(\mcD)$ uniquely determines $\mcD$ as shown in Figure
\ref{F:broken_case1}.
\begin{figure}[h]
    \begin{tikzpicture}[scale=0.5]
        \begin{scope}[shift={(-2,0)}]
        \ppFinite{5}{
                {0,0,0,3,3},
                {0,2,2,2,0},
                {0,1,1,0}}
        \end{scope}
        \draw[->, shift={(0,1.5)},black,line width=.25mm](0,0)--(1,0);
        \begin{scope}[shift={(7,0)}]
        \ppFinite{5}{
                {0,0,0,3,3},
                {0,2,2,2,0},
                {1,1,1,0}}
        \draw[dashed, red,line width=0.25mm] (0,0)--(0,4);
        \end{scope}
        \end{tikzpicture}
        \caption{The broken staircase diagram $\mcB(\mcD)$ determining $\mcD$.}
        \label{F:broken_case1}
\end{figure}
If $B_m \not\subset B_{m-1}$, then there are two possibilities for $\mcD$ given
$\mcB(\mcD)$.  Either $\mcD_{s_n}=\{B_m\}$ and thus
$s_n,s_{n+1}\in B_m$, or $\mcD_{s_n}=\{B_{m-1}\}$ which implies
$B_m=\{s_{n+1}\}$ (see Figure \ref{F:broken_case2}).
\begin{figure}[h]
    \begin{tikzpicture}[scale=0.5]
        \begin{scope}[shift={(-2,0)}]
        \ppFinite{5}{
                {0,0,0,3,3},
                {0,0,2,2,0},
                {0,1,1,0}}
        \end{scope}
        \draw[->, shift={(0,1.5)},black,line width=.25mm](0,0)--(1,0);
        \begin{scope}[shift={(7,0)}]
        \ppFinite{5}{
                {0,0,0,3,3},
                {0,0,2,2,0},
                {1,1,1,0}}
        \draw[dashed, red,line width=0.25mm] (0,1)--(0,4);
        \end{scope}
        \draw (10,1.5) node {\text{or}};
        \begin{scope}[shift={(16,0)}]
        \ppFinite{5}{
                {0,0,0,3,3},
                {0,0,2,2,0},
                {0,1,1,0,0},
                {4,0}}
        \draw[dashed, red,line width=0.25mm] (0,1)--(0,5);
        \end{scope}
        \end{tikzpicture}
        \caption{Two possibilities for $\mcD$ given $\mcB(\mcD)$.}
        \label{F:broken_case2}
\end{figure}
In the latter case, removing the last block from $\mcD$ gives a unique
staircase diagram in $M^+(n)$. Hence $b_n = m_{n+1}-m_n$ and
\begin{equation*}
    t+tA_B(t)=A_M(t)-tA_M(t).
\end{equation*}
This proves the proposition.
\end{proof}

We can now state the main bijection:
\begin{prop}\label{P:affineSD_structure}
    There is a bijection between fully-supported spherical staircase diagrams on
    $\tilde{\Gamma}_n$, and pairs $[(\mcB_1,\ldots,\mcB_{2k}),v]$, where
    \begin{itemize}
        \item $\mcB_i$ is a broken staircase on $\Gamma_{n_i}$, $n_i \geq 1$,
        \item $\sum_{i=1}^{2k} n_i = n$,
        \item for all $1 \leq  i \leq 2k-1$, if $\mcB_i$ is increasing (resp. decreasing)
            then $\mcB_{i+1}$ is decreasing (resp. increasing), and
        \item $v$ is a distinguished vertex in $\mcB_{2k}$.
    \end{itemize}
\end{prop}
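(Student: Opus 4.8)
The plan is to construct the bijection explicitly in both directions and then verify it is well-defined and inverse to itself. Throughout, let $\mcD$ be a fully-supported spherical staircase diagram on $\tilde\Gamma_n$. Since $\mcD\ne\{\tilde S_n\}$, every $B\in\mcD$ is a proper arc of the cycle, so each block is an interval $\arint{i}{j}$, and the staircase axioms (2) and (3) of Definition \ref{D:staircase} force the chains $\mcD_s$, with their minima and maxima, to vary in a controlled, locally monotone way as one walks around $\tilde\Gamma_n$. The first task is to package this local behaviour into a \emph{monotone decomposition}: analogously to the finite-type-$A$ analysis underlying \cite[Proposition 8.3]{RS15}, I would show that $\tilde\Gamma_n$ breaks canonically into arcs $A_1,\dots,A_m$, listed in cyclic order, so that $\mcD$ restricted to $A_i$ together with the first vertex of $A_{i+1}$ (read as a phantom vertex) is a fully-supported increasing or decreasing staircase diagram on a path; truncating off the phantom vertex yields a broken staircase diagram $\mcB_i$ on $\Gamma_{n_i}$. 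The arc boundaries sit at the ``peaks'' and ``valleys'' of the monotone structure, and the phantom vertices are exactly the vertices shared between a block truncated at an arc boundary and the adjacent arc, so the non-phantom vertices partition $\tilde S_n$ and $\sum_i n_i=n$.

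The structural claims I would then need are: (i) this decomposition exists and is canonical; (ii) consecutive $\mcB_i$ alternate between increasing and decreasing, since the type necessarily flips at each peak and each valley; and (iii) the number of arcs is even, say $m=2k$, because walking once around the cycle the type must alternate and return to its starting value. To turn the \emph{cyclic} list $(\mcB_i)$ into the \emph{linear} list in the statement, and to recover the labelling of $\tilde\Gamma_n$, I would use the basepoint $s_0$: declare $\mcB_{2k}$ to be the unique arc whose non-phantom vertices contain $s_0$, and let $v$ be that vertex. This simultaneously rigidifies the cyclic ambiguity and is precisely the datum of a distinguished vertex $v\in\mcB_{2k}$; the alternation in the statement is imposed only for $1\le i\le 2k-1$ because the wrap-around instance is automatic once the count is known to be even.

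For the inverse map, given $[(\mcB_1,\dots,\mcB_{2k}),v]$ with $\mcB_i$ on $\Gamma_{n_i}$ and $\sum n_i=n$, I would lay the arcs around a cycle on $n$ vertices in the given cyclic order, glue $\mcB_i$ to $\mcB_{i+1}$ by identifying the phantom vertex of $\mcB_i$ with the first vertex of $\mcB_{i+1}$ (so that a block truncated in $\mcB_i$ is completed using the beginning of $\mcB_{i+1}$), pin down the labelling by setting the chosen $v$ equal to $s_0$, and take the partial order generated by the orders on the $\mcB_i$ together with the forced comparabilities across glued arcs. One then checks, axiom by axiom of Definition \ref{D:staircase}, that the result is a fully-supported spherical staircase diagram on $\tilde\Gamma_n$, and that cutting followed by gluing (and vice versa) is the identity.

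The step I expect to be the main obstacle is establishing the canonical monotone decomposition and getting the bookkeeping at peaks and valleys exactly right: precisely defining where an arc begins and ends, showing that exactly one phantom vertex is produced at each arc boundary so that $\sum n_i=n$ comes out on the nose, proving the type genuinely alternates so that the number of arcs is even, and handling the degenerate configurations — a single block wrapping around most of the cycle, an arc consisting of a single vertex, a piece that is simultaneously increasing and decreasing, or very small $n$ — where peaks, valleys, and the phantom-gluing can collide. The technical heart is the finite-type-$A$ structure theory of \cite{RS15} plus the extra work needed to wrap it around a cycle and to control the basepoint; once that is in place, the verification that the two maps are mutually inverse is routine, if somewhat lengthy.
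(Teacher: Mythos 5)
Your overall plan is the same as the paper's, and you correctly identify the crux, but you leave it unresolved: you never actually construct the ``canonical monotone decomposition'' of $\tilde\Gamma_n$ into alternating arcs, and that construction is essentially the entire content of the proposition. The remaining ingredients (a linear broken-staircase theory, a basepoint to break cyclic symmetry, a gluing inverse) are all routine once the decomposition is in hand, as you say, so a proof proposal that punts on the decomposition has a genuine gap rather than a sketch of a complete argument.

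Here is the mechanism the paper uses to fill that gap, which is worth internalizing because it makes all the bookkeeping you worried about (exactly one phantom vertex per boundary, evenness of the number of arcs, degenerate small cases) come out automatically. For a block $B=\arint{i}{j}$, axiom (3) of Definition~\ref{D:staircase} forces $\mcD_{s_j}\cup\mcD_{s_{j+1}}$ to be a chain with $\mcD_{s_j}$ and $\mcD_{s_{j+1}}$ saturated in it; since $B$ is a maximum or minimum of $\mcD_{s_j}$, this forces the elements of $\mcD_{s_{j+1}}$ to lie entirely on one side of $B$, so $B$ has a \emph{unique} cover or cocover containing $s_{j+1}$ --- the ``right cover.'' Iterating right covers from any block produces a single cyclic sequence $B_1,\dots,B_m$, and a short argument (again using saturation) shows this sequence exhausts $\mcD$. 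The extremal (maximal or minimal) blocks in this cyclic sequence necessarily alternate between maximal and minimal, which is exactly your ``peaks and valleys alternate'' claim, and alternation around a cycle forces their number to be even. Each extremal block contains, by axiom (4), a vertex lying in no other block; taking the leftmost such vertex $s_{c_j}$ in each extremal block and cutting the cycle at the points $c_1<\dots<c_{2k}$ gives the arcs $J_1,\dots,J_{2k}$, and $\mcB_j$ is the restriction of $\mcD$ to $J_j$. The fact that each $\mcB_j$ really is an increasing or decreasing broken staircase, and that blocks split across at most two arcs, uses that $s_{c_j}$ lies in only one block and that no block of $\mcD$ contains another (\cite[Lemma~2.6(b)]{RS15}). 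Finally the distinguished vertex is taken to be $s_{n-1}$ (which always lands in $\mcB_{2k}$ after the cyclic sort of the $c_j$'s); your choice of $s_0$ is an equivalent normalization. If you replace the hand-waved ``monotone decomposition exists and is canonical'' with this right-cover argument, the rest of your outline (gluing inverse, phantom-vertex bookkeeping, basepoint normalization) goes through essentially as you describe.
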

\begin{proof}
    For the purpose of this proof, we let $s_{j+nk} = s_j$ for any $k$ and
    $0 \leq j < n$.  Suppose $\mcD$ is a fully-supported staircase diagram on
    $\tilde{\Gamma}_n$, and $B$ is any block of $\mcD$, say $B = \arint{i}{j}$.
    By Definition \ref{D:staircase}, part (3), all the blocks of
    $\mcD_{s_{j+1}}$ are comparable with $B$. If $B$ has an upper cover $B'
    \succeq B$ in $\mcD_{s_j} \cup \mcD_{s_{j+1}}$, then there are no elements of
    $\mcD_{s_{j+1}}$ below $B$, since then $\mcD_{s_{j+1}}$
    would not be saturated in $\mcD_{s_j} \cup \mcD_{s_{j+1}}$. And vice-versa,
    if $B$ has a lower cover in $\mcD_{s_j} \cup \mcD_{s_{j+1}}$ then there are
    no elements of $\mcD_{s_{j+1}}$ above $B$. Consequently we can say that $B$
    has a unique cover $B'$ containing $s_{j+1}$. We call $B'$ the right cover
    of $B$.

    Choose some block $B_1$, and let $B_1,\ldots,B_m$ be a sequence where
    $B_{i+1}$ is the right cover of $B_i$ for $1  \leq i < m$, and $B_1$ is the
    right cover of $B_{m}$. Then every block of $\mcD$ must appear in this
    sequence. Indeed, every vertex of $\tilde{\Gamma}_n$ appears in some block
    in this sequence, so every block of $\mcD$ is comparable to some
    element of the sequence. It follows that if there is a block of $\mcD$ not in
    the sequence, then there is a block $B$ not in the sequence which has an
    upper or lower cover $B'$ in the sequence. Then either $B$ will be the right
    cover of $B'$, or $B'$ will be the right cover of $B$. But the same argument
    as above shows that $B'$ has a unique left cover, and this is the only element
    with $B'$ as a right cover. So in both cases, $B$ must also be in the sequence,
    a contradiction.

    Let $B_{i_1},\ldots,B_{i_{m}}$ denote the subsequence of extremal
    blocks, i.e. blocks which are maximal or minimal. Note that if $B_{i_j}$ is
    maximal then $B_{i_{j+1}}$ must be minimal, and vice-versa. Since
    $\tilde{\Gamma}_n$ is a cycle, the same must apply to $B_{i_{m}}$ and
    $B_{i_1}$, and in particular $m$ must be even. By Definition \ref{D:staircase}, part (4),
    every extremal block contains a vertex which does not belong to any block.
    Let $1 \leq c_j \leq n$ be the index of the leftmost such vertex in
    $B_{i_j}$. By cyclically shifting the indices,
    we can assume that $1 \leq c_1 < c_2 < \ldots < c_m < n$.
    Finally, set
    \begin{equation*}
        J_{j} = \begin{cases} \arint{c_j}{c_{j+1}-1} & 1 \leq j < m \vspace{.1in}\\
                              \arint{c_{m}}{c_1-1} & j = m
                \end{cases},
    \end{equation*}
    so that $J_1,\ldots,J_m$ partitions $\tilde{S}_n$, and let
    \begin{equation*}
        \mcB_j = \{ B \cap J_j \ :\  B \in \mcD \text{ and } B \cap J_j \neq \emptyset \}
    \end{equation*}
    with the induced partial order. Since $B_{i_j}$ is the only block
    containing $s_{c_j}$, and no block of $\mcD$ contains any other \cite[Lemma
    2.6(b)]{RS15}, the block $B_{i_j}$ can meet at most two of the intervals
    $J_{k}$. Hence $\mcB_j$ is either an increasing or decreasing chain. It
    follows that $\mcB_1,\ldots,\mcB_m$ is a sequence of broken staircases as
    required. We set $v$ to be the vertex $s_{n-1}$, which is always in $\mcB_m$
    by construction.

    This construction gives a map from staircase diagrams to sequences of
    broken staircases with a marked vertex. To show that this map has an inverse,
    suppose that $\mcB = \{ B_1 \prec \cdots \prec B_m\}$ is an increasing broken staircase.
    If $B_m \subset B_{m-1}$ then we can think of $\mcB$ as the staircase diagram
    $\{ B_1 \prec \cdots \prec B_{m-1}\}$ with an additional broken block $B_m$ on top
    of $B_{m-1}$. If $B_m \not\subset B_{m-1}$, so $\mcB$ is a staircase diagram
    in its own right, then we think of $\mcB$ as a staircase diagram with an
    empty broken block on top of the block $B_m$, starting and ending after the
    rightmost vertex of $B_m$. If $\mcB'$ is then a decreasing staircase diagram,
    we can glue $\mcB$ and $\mcB'$ together by attaching the broken block of $\mcB$
    to the first block of $\mcB'$. We can similarly glue a decreasing broken
    staircase to an increasing broken staircase. Given a sequence $\mcB_1,\ldots,\mcB_{2k}$
    of alternately increasing and decreasing broken staircases, we can glue them
    together in order, and then glue $\mcB_{2k}$ to $\mcB_1$ to get a staircase
    diagram on a cycle. Labelling the vertices of the cycle with $s_0,\ldots,s_{n-1}$
    starting to the right of the marked vertex $v$, we get a staircase diagram on
    $\tilde{\Gamma}_n$, and this process inverts the above map.
\end{proof}

\begin{example}
The staircase diagram $$\mcD=\{\arint{1}{3}\prec\arint{3}{4}\prec\arint{5}{6}\succ\arint{6}{7}\succ\arint{7}{8}\prec\arint{9}{1}\}$$ has four extremal blocks and partitions into an alternating sequence of increasing and decreasing broken staircase diagrams as follows:
\begin{equation*}
    \begin{tikzpicture}[scale=0.5]
        \begin{scope}[shift={(-2,0)}]
        \ppAff{11}{
                {0,0,6,6,0,0,0,2,2,2,0},
                {1,1,0,5,5,0,3,3,0,1,1},
                {0,0,0,0,4,4,0}}
        \end{scope}
        \draw[dashed, red,line width=0.25mm] (-10,-1)--(-10,4);
        \draw[dashed, red,line width=0.25mm] (-7,-1)--(-7,4);
        \draw[dashed, red,line width=0.25mm] (-4,-1)--(-4,4);
        \draw[dashed, red,line width=0.25mm] (-3,-1)--(-3,4);
        \draw[<->, shift={(0,1.5)},black,line width=.25mm](0,0)--(1,0);
        \begin{scope}[shift={(6,0)}]
        \ppFinite{6}{
                {0,0,0,2,2},
                {0,0,3,3,0},
                {0,0,0,0}}
        \end{scope}
        \begin{scope}[shift={(9,0)}]
        \ppFinite{8}{
                {0,6,0,0},
                {0,5,5,0},
                {0,0,4,4}}
        \end{scope}
        \begin{scope}[shift={(10.5,0)}]
        \ppFinite{8}{
                {6}}
        \end{scope}
        \begin{scope}[shift={(16,0)}]
        \ppFinite{2}{
                {0,2,0,0},
                {0,1,1,1}}
        \end{scope}
        \end{tikzpicture}
\end{equation*}
\end{example}

Define the generating series
    $$\bar A(t)=\sum_{n=1}^{\infty} \bar a_n\, t^n$$
where $\bar a_n$ denotes the number of fully supported spherical staircase diagrams on $\tilde\Gamma_n.$

\begin{cor}\label{C:affine_full_support}
    The generating function $\displaystyle\bar A(t)=\frac{2 A_B(t) \cdot t\frac{d}{dt} A_B(t)}{1-A_B(t)^2}.$
\end{cor}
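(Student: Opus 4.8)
The plan is to convert the bijection of Proposition \ref{P:affineSD_structure} directly into a generating-function identity. By that proposition, $\bar a_n$ counts tuples $[(\mcB_1,\ldots,\mcB_{2k}),v]$ with $k \geq 1$ in which each $\mcB_i$ is a broken staircase on some $\Gamma_{n_i}$ with $n_i \geq 1$, the types of the $\mcB_i$ alternate between increasing and decreasing, $\sum_{i} n_i = n$, and $v$ is a distinguished vertex of $\mcB_{2k}$. The first thing I would record is that a broken staircase on $\Gamma_m$ is automatically fully supported: it is obtained by intersecting a fully supported staircase diagram on $\Gamma_{m+1}$ with $S_m$, so every vertex $s_1,\ldots,s_m$ lies in some block. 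Hence there are exactly $m$ choices for the distinguished vertex, and the generating function for ``broken staircase on $\Gamma_m$ with a marked vertex'' is $\sum_{m\geq 1} m\, b_m\, t^m = t\frac{d}{dt}A_B(t)$.

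Next I would assemble the product. Fix $k \geq 1$. The alternation condition says that once we decide whether $\mcB_1$ is increasing or decreasing, the type of every subsequent $\mcB_i$ is forced; this is a single binary choice and accounts for the overall factor of $2$. With the type pattern fixed, the pieces $\mcB_1,\ldots,\mcB_{2k-1}$ are chosen independently, each a broken staircase of its prescribed type on some $\Gamma_{n_i}$ --- and there are $b_{n_i}$ of each prescribed type --- while $\mcB_{2k}$ is a broken staircase of its prescribed type together with a marked vertex. Since $n=\sum_i n_i$, the passage from a multivariate count to the univariate series gives, for this fixed $k$ and fixed type pattern, the generating function $A_B(t)^{2k-1}\cdot t\frac{d}{dt}A_B(t)$.

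Summing over the two type patterns and over all $k\geq 1$, and using the geometric series $\sum_{k\geq 1} A_B(t)^{2k-1} = A_B(t)/(1-A_B(t)^2)$ (legitimate as formal power series since $A_B(0)=0$), yields
\begin{equation*}
    \bar A(t) \;=\; 2\sum_{k\geq 1} A_B(t)^{2k-1}\, t\frac{d}{dt}A_B(t)
    \;=\; 2\, t\frac{d}{dt}A_B(t)\cdot\frac{A_B(t)}{1-A_B(t)^2}
    \;=\; \frac{2 A_B(t)\, t\frac{d}{dt}A_B(t)}{1-A_B(t)^2},
\end{equation*}
which is the claimed formula. I expect the only delicate points to be bookkeeping: getting the factor of $2$ exactly right (it is precisely the choice of type for the first broken staircase, no more and no less), and confirming that the marked-vertex data contributes exactly the operator $t\frac{d}{dt}$, which in turn rests on the observation above that broken staircases have full support, so that a broken staircase on $\Gamma_m$ has exactly $m$ candidate vertices for $v$.
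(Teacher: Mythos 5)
Your argument is correct and follows the same route as the paper's own (very terse) proof of this corollary: translate Proposition \ref{P:affineSD_structure} into a generating-function identity, with the factor of $2$ coming from the binary choice of whether $\mcB_1$ is increasing or decreasing, and the factor $t\frac{d}{dt}A_B(t)$ accounting for the marked vertex in $\mcB_{2k}$. The one thing you spell out that the paper leaves implicit is the observation that broken staircases are fully supported, so that a broken staircase on $\Gamma_m$ has exactly $m$ vertices available for marking; this is a correct and worthwhile clarification.
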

\begin{proof}
    Follows immediately from Proposition \ref{P:affineSD_structure}, and the
    fact that $t \frac{d}{dt} A_B(t)$ is the generating series for broken
    staircases with a marked vertex. Note that we get a factor of two because
    the first broken staircase can be increasing for decreasing.
\end{proof}
If a staircase diagram on $\tilde\Gamma_n$ is not fully supported, then it is a
disjoint union of fully supported staircase diagrams over a collection of
subpaths of the cycle.  Let $f_n$ denote the number of fully supported
staircase diagrams on the path $\Gamma_n$ and define the generating series
$$A_F(t):=\sum_{n=0}^{\infty} f_n\, t^n.$$ The following proposition is proved in
\cite[Proposition 8.3]{RS15}. We give an alternate proof using broken
staircase diagrams.
\begin{prop}\label{P:finite_full_support}\emph{(\cite[Proposition 8.3]{RS15})}
    The generating function $\displaystyle A_F(t)=\frac{A_M(t)}{1-A_B(t)}$.
\end{prop}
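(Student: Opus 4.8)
The plan is to prove the identity by a bijection, decomposing a fully supported staircase diagram on the path $\Gamma_n$ in the spirit of the cutting argument used for the cycle $\tilde\Gamma_n$ in the proof of Proposition \ref{P:affineSD_structure}. Concretely, I will exhibit a bijection between fully supported staircase diagrams on $\Gamma_n$ and tuples $(\mcB_1,\ldots,\mcB_j,\mcD_0)$, where $j\geq 0$, each $\mcB_i$ is a broken staircase diagram on some $\Gamma_{n_i}$, $\mcD_0$ is a fully supported increasing staircase diagram on $\Gamma_{n_0}$, $n_0+n_1+\cdots+n_j=n$, and consecutive pieces have opposite handedness (increasing, then decreasing, then increasing, and so on). Since reversing the partial order exchanges increasing and decreasing broken staircases, the number of increasing broken staircases on $\Gamma_k$ equals the number of decreasing ones, namely $b_k$, and likewise $|M^+(k)|=|M^-(k)|=m_k$; so the handedness alternation imposes no multiplicative cost, and multiplying generating series gives $A_F(t)=A_M(t)\sum_{j\geq 0}A_B(t)^j=\frac{A_M(t)}{1-A_B(t)}$.

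For the forward map I would first carry out the same analysis as in the proof of Proposition \ref{P:affineSD_structure}: every block of a fully supported staircase diagram $\mcD$ has a well-defined right cover; the right-cover relation arranges all blocks of $\mcD$ in a single sequence $B_1,\ldots,B_m$, which now terminates rather than cycles since $\Gamma_n$ is a path; the extremal (maximal-or-minimal) blocks alternate along this sequence; and by part (4) of Definition \ref{D:staircase} each extremal block has a leftmost vertex contained in no other block. Cutting $\Gamma_n$ at these private vertices partitions $\tilde S_n$ into consecutive intervals $J_1,\ldots,J_p$, each extremal block meets at most two consecutive intervals (the same saturation argument as in Proposition \ref{P:affineSD_structure}), and the restriction of $\mcD$ to each $J_i$ is a monotone chain. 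For $i<p$, adjoining the single extra vertex at which the extremal block straddling $J_i$ and $J_{i+1}$ becomes private turns $\mcD|_{J_i}$ into a genuine fully supported increasing-or-decreasing staircase diagram on $|J_i|+1$ vertices with its last vertex deleted --- that is, precisely a broken staircase $\mcB_i$ on $\Gamma_{|J_i|}$ in the sense of the paragraph preceding Proposition \ref{P:broken_increasing}. The last piece $\mcD|_{J_p}$ is straddled into by no further extremal block, so it is already a fully supported monotone staircase diagram; applying $\flip$ if necessary we take it increasing, giving the factor $\mcD_0$.

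The inverse map glues pieces together exactly as in the proof of Proposition \ref{P:affineSD_structure}: one attaches the broken block of $\mcB_i$ (its top block if that block is contained in the block below it, and otherwise an empty block placed immediately to the right of its rightmost vertex) onto the bottom block of $\mcB_{i+1}$, and finally attaches the broken block of $\mcB_j$ onto the bottom block of $\mcD_0$ --- but, unlike the cyclic case, one does \emph{not} glue $\mcD_0$ back onto $\mcB_1$. The opposite-handedness condition between consecutive pieces is exactly what prevents a glued pair from fusing into a single longer monotone run, making the decomposition canonical and the two maps mutually inverse. The step I expect to be the main obstacle is the boundary behaviour at the two ends of the path, which has no counterpart in the cyclic argument: one must check that the portion of $\mcD$ lying to the left of the first extremal block's private vertex combines with that block into a legitimate broken staircase $\mcB_1$ (nothing bounds it on the left), that the portion to the right of the last private vertex is a \emph{complete} increasing staircase diagram $\mcD_0$ contributing the lone $A_M$ factor rather than a broken piece, and that the degenerate cases --- a piece consisting of a single block, which is simultaneously increasing and decreasing --- are handled without double counting. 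Once the bijection is in place the proposition follows, and substituting $A_B(t)=\frac{(1-t)A_M(t)}{t}-1$ from Proposition \ref{P:broken_increasing} recovers the closed form stated in \cite[Proposition 8.3]{RS15}.
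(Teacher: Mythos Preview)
Your overall plan is the same as the paper's: cut a fully supported staircase diagram on $\Gamma_n$ at the private vertices of extremal blocks to obtain a sequence of broken staircases followed by one honest monotone staircase. However, there is a genuine gap in your execution, and it sits exactly where you anticipated it would.

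If you cut at the leftmost private vertex of \emph{every} extremal block, then the rightmost interval $J_p$ begins at the private vertex of the last extremal block $B_k$, and the only block of $\mcD$ meeting $J_p$ is $B_k$ itself. Hence your final piece $\mcD_0=\mcD|_{J_p}$ is always a \emph{single block}, not a general increasing staircase; it therefore cannot account for the factor $A_M(t)$ directly. Worse, the handedness of the penultimate piece $\mcB_{p-1}$ depends on whether $B_k$ is maximal or minimal, so it is not forced to be decreasing. Your ``apply $\flip$ if necessary'' does nothing here, since flipping a single block is the identity; the alternation constraint that $\mcD_0$ be increasing with $\mcB_{p-1}$ decreasing simply fails when $B_k$ is maximal. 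So as written the map does not land in the target set you describe, and the product $A_M(t)\sum_{j\ge 0}A_B(t)^j$ is not what your decomposition computes.

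The paper fixes exactly this point by choosing the cut set more carefully: it cuts at the private vertices of the extremal blocks $B_1,\ldots,B_m$ where $B_m$ is the \emph{last minimal} extremal block (so $m\in\{k-1,k\}$). The final interval $[a_m,n]$ then runs from a minimal block upward, so $\mcB_m$ is always a genuine increasing staircase, and the piece $\mcB_{m-1}$ immediately before it is automatically decreasing. The paragraph following the proof addresses precisely the worry you raised: when $\mcD$ ends with a decreasing run, $m=k$ and the last piece degenerates to a single block, which is still a legitimate (if trivial) increasing staircase; allowing $\mcB_m$ to be ``increasing or decreasing'' instead would overcount because single blocks are both. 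So the missing idea in your proposal is this asymmetric choice of where to stop cutting, which replaces your flip and makes the handedness alternation come out for free.
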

\begin{proof}
    We emulate the proof of Proposition \ref{P:affineSD_structure} as follows:
    Given a staircase diagram $\mcD$ on $\Gamma_{n}$, let $B_1,\ldots,B_k$ be
    the maximal and minimal blocks in order from left to right. Let $1 \leq m
    \leq k$ be the largest index such that $B_m$ is minimal in $\mcD$ (so actually,
    $m\in \{k-1,k\}$). For every $1  \leq j \leq m$, let $a_j$ be the index of
    the leftmost element of $B_j$ which is not contained in any other block, and
    let
    \begin{equation*}
        b_j = \begin{cases}
            a_{j+1}-1 & j < m \\
            n & j = m
        \end{cases}.
    \end{equation*}
    Let $\mcB_i = [s_{a_i},s_{b_i}]$. Then $\mcB_i$ is a broken staircase for
    $1 \leq i \leq m-1$, while $\mcB_m$ is an increasing staircase. This also
    implies that $\mcB_{m-1}$ is decreasing, $\mcB_{m-2}$ is increasing, and
    so on. It is not hard to see that this map is a bijection, and hence
    every staircase diagram over $\Gamma_n$ decomposes into a sequence of
    broken staircases, followed by an increasing staircase.
\end{proof}
One subtlety of the above bijection is that it seems to miss the case when
$\mcD$ is decreasing, or more generally, ends with a decreasing staircase.
However, a decreasing staircase decomposes into a decreasing broken staircase
followed by a single block. Since a single block is an increasing staircase,
the bijection will in fact count decreasing staircases correctly. Since
single blocks are both increasing and decreasing, the seemingly more
straightforward approach of allowing $\mcB_m$ to be increasing or decreasing
will lead to overcounts. The reason this problem doesn't arise in Proposition
\ref{P:affineSD_structure} is that in that bijection every $\mcB_i$ is
a broken staircase. We can always tell whether a broken staircase is increasing
or decreasing based on where it is glued to the adjacent broken staircase.

Finally, let $a_n$ denote the number of spherical staircase diagrams over the
graph $\tilde\Gamma_n$ and define
\begin{equation*}
    A(t)=\sum_{n=1}^{\infty} a_n\, t^n.
\end{equation*}
\begin{prop}\label{P:total_gen_series}
    The generating series $$A(t)=\bar A(t)+\frac{t\frac{d}{dt}\left(A_*(t)\right)}{1-A_*(t)}+\frac{t^2}{1-t}$$
    where $\displaystyle A_*(t)=\frac{tA_F(t)}{1-t}.$
\end{prop}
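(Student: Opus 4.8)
The plan is to partition the spherical staircase diagrams over $\tilde\Gamma_n$ by their support (the union of all blocks). Such a diagram $\mcD$ falls into exactly one of three classes: (I) $\mcD$ is fully supported; (II) the support of $\mcD$ is a nonempty proper subset of $\tilde S_n$; or (III) $\mcD=\emptyset$. Class (I) is enumerated by $\bar A(t)$ by Corollary~\ref{C:affine_full_support}. Class (III) contains exactly one diagram for each $n$, and since type $\tilde A_n$ is only considered for $n\ge 2$ (note that $\bar A(t)$ and the middle term below both vanish in degrees $\le 1$), its contribution is $\sum_{n\ge 2}t^n=t^2/(1-t)$. So the substance of the proposition is that class (II) is enumerated by $\dfrac{t\frac{d}{dt}A_*(t)}{1-A_*(t)}$, where $A_*(t)=\dfrac{tA_F(t)}{1-t}$.

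Next I would describe the structure of a class-(II) diagram $\mcD$. Its support $T$ is a nonempty proper subset of the cycle, so $T$ is a disjoint union of $j\ge 1$ maximal arcs $P_1,\dots,P_j$, separated by $j$ maximal ``gaps'' $G_1,\dots,G_j$, with the $P_i$ and $G_i$ alternating around $\tilde\Gamma_n$. Because every block of $\mcD$ is connected, each block lies in a single $P_i$, and by Definition~\ref{D:staircase}(1) there is no covering relation between blocks in distinct $P_i$ (their union would be disconnected); since $\mcD$ is finite, there is therefore no order relation at all between blocks in distinct $P_i$. Hence $\mcD=\bigsqcup_i\mcD|_{P_i}$ as posets, and one checks directly from the axioms that each $\mcD|_{P_i}$ is a fully supported staircase diagram on the path $P_i$, and conversely that any such collection glues back to a valid (automatically spherical, since all blocks lie in finite-type pieces) staircase diagram on $\tilde\Gamma_n$ with support $T$. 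Attaching each gap $G_i$ to the maximal support arc immediately clockwise from it yields a decomposition of the labelled $n$-cycle into $j\ge 1$ consecutive arcs, the arc $G_i\cup P_i$ recording a positive integer (the length of $G_i$) together with a fully supported staircase diagram on the nonempty path $P_i$. Such a decorated arc --- an ``$A_*$-structure'' --- is counted by $\frac{t}{1-t}\cdot A_F(t)=A_*(t)$. Thus class-(II) diagrams on $\tilde\Gamma_n$ correspond bijectively to decompositions of the labelled $n$-cycle into $j\ge 1$ consecutive arcs, each carrying an $A_*$-structure.

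Finally I would count these decorated cyclic decompositions by the pointing trick. Mark the vertex $s_0$ of the cycle; this is a free choice, one per diagram, and does not change the count. The vertex $s_0$ lies in a unique decorated arc $X_0$; cutting the cycle immediately before the first (clockwise) vertex of $X_0$ and reading clockwise gives the $A_*$-structure $X_0$ with a marked atom, followed by a possibly empty sequence $X_1,\dots,X_{j-1}$ of $A_*$-structures, of total size $n$. Conversely, a pointed $A_*$-structure followed by a sequence of $A_*$-structures of total size $n$ can be wrapped around the labelled $n$-cycle in a unique way that places its marked atom at $s_0$, and this inverts the previous map --- here it is essential that $A_*(t)$ is divisible by $t^2$, so every arc is nonempty and the cut is unambiguous. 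Since a pointed $A_*$-structure has generating function $t\frac{d}{dt}A_*(t)$ and a sequence of $A_*$-structures has generating function $\frac{1}{1-A_*(t)}$, class-(II) diagrams are enumerated by $\dfrac{t\frac{d}{dt}A_*(t)}{1-A_*(t)}$. Adding the contributions of the three classes gives the stated formula.

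I expect the main obstacle to be the bookkeeping in the second step: verifying carefully, from Definition~\ref{D:staircase}, that the restriction of a staircase diagram to the maximal support arcs consists of fully supported finite-type staircase diagrams, that these glue back to a valid diagram (re-checking conditions (1)--(4)), and that the gap-to-arc assignment is well defined. Once that is in place, the passage to $A_*$-structures and the pointing argument are routine manipulations of generating functions.
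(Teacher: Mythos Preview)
Your proof is correct and follows essentially the same approach as the paper: split into fully supported, non-fully-supported nonempty, and empty diagrams; decompose the middle class into alternating support-arcs and gaps paired into $A_*$-pieces; and linearize the resulting cyclic sequence by marking $s_0$ to obtain the factor $t\frac{d}{dt}A_*(t)\big/(1-A_*(t))$. The only cosmetic difference is the order in which you group the pairs (gap-then-arc versus the paper's arc-then-gap), which of course does not affect the generating function.
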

\begin{proof}
    First note that the generating function fully supported staircase diagrams over
    $\tilde\Gamma_n$ is $\bar A(t)$.  If a staircase diagram is not fully supported
    and nonempty, then it partitions into a sequence $(\mcD_0,\mcE_0,\ldots,
    \mcD_r,\mcE_r)$ where each $\mcD_k$ is a nonempty, fully supported staircase
    diagram over a path and $\mcE_k$ is an empty staircase diagram over a path of
    at least length one.  Moreover, we can choose such a partition such that $s_0$
    is in the support of $\mcD_0$ or $\mcE_0$. Thus we get a bijection between
    non-empty non-fully-supported staircase diagrams on $\tilde \Gamma_n$ and
    sequences $(\mcD_0,\mcE_0,\ldots,\mcD_r,\mcE_r)$ where $(\mcD_0,\mcE_0)$
    has a marked vertex corresponding to $s_0$.
    The generating series for staircase diagrams corresponding to pairs
    $(\mcD_k,\mcE_k)$ over $\Gamma_n$ is
    \begin{equation*}
        \displaystyle A_*(t):=A_F(t)\cdot \frac{t}{1-t}.
    \end{equation*}
    Thus the generating function for the number non-fully support staircase
    diagrams over $\tilde\Gamma_n$ is
    \begin{equation*}
        \frac{t\frac{d}{dt}\left(A_*(t)\right)}{1-A_*(t)}+\frac{t^2}{1-t}
    \end{equation*}
    where the second summand corresponds to the generating function of empty
    staircase diagrams.  This completes the proof.
\end{proof}

\begin{proof}[Proof of Theorem \ref{T:enum}]
    Combine Propositions \ref{P:increasing}, \ref{P:broken_increasing},
    \ref{P:finite_full_support}, and \ref{P:total_gen_series}, along with
    Corollary \ref{C:affine_full_support}.
\end{proof}

\bibliographystyle{amsalpha}
\bibliography{palindromic}

\end{document}